\documentclass[11pt,a4paper]{article}

\setlength{\parskip}{2mm}
\setlength{\parindent}{4mm}
\setlength{\headsep}{7mm}
\setlength{\headheight}{5mm}
\setlength{\textwidth}{155mm}
\setlength{\evensidemargin}{0mm}
\setlength{\oddsidemargin}{5mm}
\setlength{\textheight}{245mm}
\setlength{\topmargin}{-10mm}
\setlength{\arraycolsep}{1mm}

\usepackage[normalem]{ulem} 
\usepackage{pifont}
\usepackage{phaistos}
\usepackage{marvosym}
\usepackage{latexsym}
\usepackage{psfrag}
\usepackage{amsmath,amssymb,cite}
\usepackage{latexsym}
\usepackage{graphicx}
\usepackage{lscape}
\usepackage{color}
\usepackage{xcolor}
\usepackage{soul}
\usepackage{afterpage}

% Hyper-referencing of eqns, refs., url, etc.
\usepackage{hyperref}
\hypersetup{
    colorlinks=true,
    linkcolor=blue,
    filecolor=magenta,      
    urlcolor=cyan,
    citecolor=blue,
}
 \urlstyle{same}
 
 \definecolor{darkgreen}{rgb}{0,.5,.5}

\DeclareMathOperator*{\argmin}{argmin}

\newcommand{\ds}{\displaystyle}
\newcommand{\nexto}{\kern -0.54em}
\newcommand{\dR}{\mathbb{R}}
\newcommand{\dN}{\mathbb{N}}

\newcommand{\proofbox}{\hspace{\fill}{$\Box$}}

%highlight commands pink and green

%pointing hand

%writing hand

\newtheorem{lemma}{Lemma}
\newtheorem{theorem}{Theorem}

\newtheorem{definition}{Definition}
\newtheorem{proposition}{Proposition}

\newtheorem{remark}{Remark}

\newtheorem{example}{Example}

\newtheorem{algorithm}{Algorithm}
\newenvironment{proof}{Proof.}{\proofbox}

\begin{document}

\author{Regina S. Burachik\footnote{Mathematics, UniSA STEM, University of South Australia, Mawson Lakes,
  S.A. 5095, Australia. 
  E-mail: regina.burachik@unisa.edu.au\,.}
\and
C. Yal{\c c}{\i}n Kaya\footnote{Mathematics, UniSA STEM, University of South Australia, Mawson Lakes,
  S.A. 5095, Australia.
  E-mail: yalcin.kaya@unisa.edu.au\,.}}

\title{\vspace{-10mm}\bf Steklov Convexification and a Trajectory Method for Global Optimization of Multivariate Quartic Polynomials}
\date{\today}
\maketitle

\vspace*{-10mm}
\begin{center}
\em Dedicated to our dear friend Marco Ant\'onio L\'opez Cerd\'a on his 70th birthday
\end{center}

\begin{abstract} 
{\sf The Steklov function $\mu_f(\cdot,t)$ is defined to average a continuous function $f$ at each point of its domain by using a window of size given by $t>0$. It has traditionally been used to approximate $f$ smoothly with small values of $t$.  In this paper, we first find a concise and useful expression for $\mu_f$ for the case when $f$ is a multivariate quartic polynomial. Then we show that, for large enough $t$, $\mu_f(\cdot,t)$ is convex; in other words, $\mu_f(\cdot,t)$ convexifies $f$.  We provide an easy-to-compute formula for $t$ with which $\mu_f$ convexifies certain classes of polynomials.  We present an algorithm which constructs, via an ODE involving $\mu_f$, a trajectory $x(t)$ emanating from the minimizer of the convexified $f$ and ending at $x(0)$, an estimate of the global minimizer of $f$.  For a family of quartic polynomials, we provide an estimate for the size of a ball that contains all its global minimizers. Finally, we illustrate the working of our method by means of numerous computational examples.}
\end{abstract}

\begin{verse} 
{\em Key words}\/: {\sf Global optimization, multivariate quartic polynomial, Steklov smoothing, Steklov convexification, trajectory methods.}
\end{verse}
\begin{verse}
 {\em Mathematics Subject Classification}\/: {65K05, 90C26, 49M20}
\end{verse}

\thispagestyle{empty}

%\centerline{\bf Submitted to }
\pagestyle{myheadings}
%\markboth{}{Submitted to {\sf }, }
\markboth{}{\sf\scriptsize Steklov Convexification for Global Optimization of Quartic Polynomials \ \ by R. S. Burachik and C. Y. Kaya}

\section{Introduction}

The problem of globally minimizing a {\em multivariate quartic polynomial} (MQP), $f:\dR^n\to\dR$, arises in many applications, such as signal processing \cite{Qi2003},  independent component analysis \cite{Cardoso}, blind channel equalization in digital communication \cite{Maricic}, sensor network localization \cite{KimKojWakYam2012, Nie2009, Thng1996}, hybrid system identification problems \cite{FenLagSzn2010}, and phase retrieval \cite{CaiYang2019}. Therefore, much research has been devoted to the analysis and solution methodologies for MQPs---see, e.g.   \cite{LuoZhang2010, QiJOGO, Qi2003, LiqunQi2017, WuTiaQuaUgo2014, QiWanYang2004}).  MQP optimization problems are known to be  NP-hard \cite{LuoZhang2010}, and this prompts the use of various types of relaxation techniques within a method of solution.  

In the case when it is possible to write $f$ as a {\em sum of squares} (SOS) of polynomials, one standard approach is to use a {\em semidefinite programming} (SDP) relaxation technique.  The SDP procedure has been shown to be convergent to a global minimizer of $f$, if an Archimedean condition, which implies compactness of the feasible region, holds.  The case of unbounded feasible region has also been addressed recently in \cite{JeyLasLi2014, JeyKimLeeLi2016}. However, the SDP relaxation approach is not without drawbacks: It has been observed in \cite{JeyKimLeeLi2016, LuoZhang2010} that the size of the SDP relaxation grows exponentially with the number of variables of the polynomial.  This shortcoming makes the implementation of the SDP approach difficult for large scale polynomial optimization problems. Moreover, as stated in \cite{LuoZhang2010}, no good estimate of the error is available if the process is interrupted before attaining optimality.

If a polynomial cannot be expressed in SOS form, some other solution method needs to be employed.  In \cite{Qi2003}, the authors study the global minimization problem of an even-degree multivariate polynomial whose leading degree coefficient tensor is positive definite. Such a multivariate polynomial is referred to as a {\em normal} multivariate polynomial. They give a univariate polynomial minorant of an arbitrary normal multivariate polynomial, and use it to provide an upper bound of the norm of its global minimizer.  For a subfamily of MQPs arising in signal processing, they obtain in~\cite[Theorem 5]{Qi2003} a computable upper bound on the norm of the global minimizer. In the current paper, we extend the latter result to a more general type of normal quartic polynomial---see Theorem~\ref{BL}. 

In \cite{QiJOGO}, the author shows that if a MQP $f$ has a normal quadratic essential factor, (i.e., there are quadratic polynomials $h$ and $g$, and a constant $c_0$, such that $f(x)=h(x)g(x)+c_0$), then its global minimum can either be easily found or located within the interior of the union of two given balls. 

In \cite{QiWanYang2004}, the authors look at the specific case of normal MQPs and state that the minimization of such polynomials constitutes one of the simplest cases in nonconvex global optimization. For a normal quartic polynomial, they present a criterion to find a global descent direction at a noncritical point, a saddle point, or a local maximizer. They give sufficient conditions to decide whether a local minimizer is global. They propose a global descent algorithm for finding a global minimizer of a normal quartic polynomial when $n = 2$. For $n\ge 3$, they propose an algorithm for finding  an $\varepsilon$-global minimizer. 

Another alternative approach to solving the problem of global minimization of a MQP is based on the Lipschitz properties of a general function and/or its gradient. However, these techniques require either an {\em a priori} knowledge or an estimate of the Lipschitz constants, which can be quite challenging for a multivariate function.  For more details on Lipschitzian global optimization techniques, see the recent book  \cite{SerKva2017} and the references therein. 

In the present paper, we propose a trajectory-type method for solving the problem of global optimization of MQPs. The method we propose makes use of the so-called {\em Steklov function} $\mu_f:\dR^n\times (0,\infty) \to \dR$ associated with $f$.    This type of approach was first used in~\cite{AriBurKay2019} for $n=1$, namely for univariate global optimization, where $f$ is considered to be a general coercive function as well as specifically a monic even-degree polynomial. 

The algorithm presented in~\cite{AriBurKay2019} is motivated by two essential properties of $\mu_f$: (i)~With coercive $f$, $\mu_f(\cdot,t_0)$ is convex for some large enough $t_0 > 0$, which we refer to as {\em Steklov convexification}, and (ii)~$\lim_{t\to0}\mu_f(\cdot,t) = f(\cdot)$. The algorithm itself can be summarized as follows:  Given the minimizer $x_0$ of $\mu_f(\cdot,t_0)$, the {\em trajectory} $x:(t_0,0)\to\dR^n$, which is a solution of an ODE involving $\mu_f$ with the initial condition $x(t_0) = x_0$, yields the estimate $x(0)$ of a global minimizer of $f$.  In \cite{AriBurKay2019}, the authors prove that this procedure converges to a global minimum for every {\em univariate quartic polynomial} (UQP).  In the present paper, we provide an extension of the method proposed in~\cite{AriBurKay2019} to global MQP optimization---see Theorems~\ref{thm:mu_multivar}--\ref{convexity} and Algorithm~\ref{algo1}.

The multivariate extension we provide, i.e., Algorithm~\ref{algo1}, does not necessarily yield a global optimum, as we exemplify with a counterexample in Section~\ref{counterexample}, even for the case when $n=2$.  Although a convergence proof cannot be provided for the new algorithm, extensive numerical experiments done in Section~\ref{experiments} illustrate that it can find the global minimizer (or, at least what appears to be a ``deep" local minimizer, in the absence of the knowledge of the global minimum) of many MQPs.

Trajectory based methods have surely been used in optimization before, where the trajectories employed are solutions of ODEs typically incorporating the gradient of $f(x)$, and sometimes also with additional (inertial and damping) terms.  Convergence analyses of trajectory based methods have so far been done only for finding a local minimum---see for example \cite{AttChbPeypRed2018, BotCse2018} and the references therein.  Trajectory based methods have also been proposed for global optimization, albeit without a convergence proof, to the best knowledge of the authors---see for example \cite{SnyKok2009}.

We also recall the so-called {\em backward differential flow} method, which was proposed by Zhu et al.\ in~\cite{ZhuZhaLiu2014} for the global minimization of a general differentiable function, where the trajectories are solutions of an ODE that uses the (classical) quadratic regularization.  It was illustrated in~\cite{AriBurKay2015} via a counterexample that the backward differential flow method may not yield a global minimizer, even in the case when the function is a UQP.

It should be pointed that the Steklov operator $\mu_f(\cdot,t)$ is mostly used in the literature for obtaining a smooth approximation of the function $f$ (see e.g. \cite{ErmNorWet1995, Chen2012, GarVic2013, Gupal1977, Prud2016}). Hence, most of the attention has been devoted to its properties for small values of the parameter $t>0$.  The Steklov operator is a type of {\em averaged} function introduced by Steklov \cite{Ste1907} in 1907 for studying the problem of expanding a given function into a series of eigenfunctions defined by a 2nd-order ordinary differential operator.  It was subsequently used by Kolmogorov and Fr\'echet for compactness tests in ${\cal L}^p$.

Our approach, in contrast, treats this operator as a tool for convexifying $f$ using a large enough $t$. Indeed, for certain nonconvex quartic polynomials, we show that for every $L>0$ there exists $t_0>0$ such that $\mu_f(\cdot,t_0)$ is convex over $B[0,L]$, the closed ball of center $0$ and radius $L$. Since convexification happens for $x$ in a fixed ball, we also provide an estimate, for a subfamily of quartic polynomials, of an $L$ such that all global minimizers of $f$ are contained in $B[0,L]$.  In the current paper, we list many properties of $\mu_f$ in various preliminary results and remarks.

The structure of the paper is as follows. In Section~\ref{sec:convexification}, we recall the Steklov operator and provide a concise and useful formula for the $\mu_f(\cdot,t)$ associated with any multivariate quartic polynomial. We establish conditions under which $\mu_f(\cdot,t_0)$ is convex.  We illustrate these results with examples.  Our trajectory method is motivated and Algorithm~\ref{algo1} is presented in Section~\ref{sec:trajectory}, where we show with a counterexample that convergence to a global minimum may not eventuate, even for $n=2$.  In Section~\ref{experiments} we carry out numerical experiments. We demonstrate that in many challenging cases Algorithm~\ref{algo1} does provide the global minimum of $f$.  We implement our method for a large number of randomly generated normal polynomials to extract information about the behaviour of the proposed algorithm.  We provide our concluding remarks in Section~\ref{conclusion}.

\section{Steklov Convexification}
\label{sec:convexification}

\begin{definition} \rm
The {\em Steklov} ({\em smoothing}) {\em function} (see \cite[Definition 3.8]{ErmNorWet1995}) associated with a continuous function %\linebreak  
$f:\dR^n \to \dR$ is denoted by $\mu_f:\dR^n\times (0,\infty) \to \dR$ and defined as
\begin{equation}  \label{Steklov}
\mu_f(x,t) := \frac{1}{(2t)^n}\,\int_{x_n-t}^{x_n+t}\cdots\int_{x_1-t}^{x_1+t} f(\tau_1,\ldots,\tau_n)\,d\tau_1\cdots d\tau_n\,,
\end{equation}
with $x:=(x_1,\ldots,x_n)\in\dR^n$.  We also refer to $\mu_f(\cdot,\cdot)$ as the {\em Steklov convexification of} $f$.
\end{definition}

\begin{remark} \rm
Since the function $f$ is continuous, $\mu_f:\dR^n\times (0,\infty)\to\dR$ is well defined and  differentiable on $\dR^n\times (0,\infty)$. If $f$ is defined on $\dR^n$, then $\mu_f$ is defined on $\dR^n\times (0,\infty)$.
\proofbox
\end{remark}

In what follows, we denote the $\ell_2$-norm by $\|\cdot\|$. We call $B[0,L]:=\{x\in \dR^n\::\: \|x\|\le L\}$ the {\em closed $\ell_2$-ball centered at $0$ and with radius $L$ in $\dR^n$}.  

In Lemma~\ref{lem:mu_univar} and Theorem~\ref{thm:mu_multivar} below, we express $\mu_f(x,t)$ in \eqref{Steklov} as a quartic polynomial function in $x$ and $t$ for the cases when $f(x)$ is a UQP and a MQP, respectively.  Lemma~1 is from~\cite{AriBurKay2019} but we repeat its short proof here in connection with \eqref{Steklov} for completeness.

\begin{lemma}[Proposition 3 in \cite{AriBurKay2019}]  \label{lem:mu_univar}
Let $f:\dR\to \dR$ be a UQP. Namely, assume that $f(x) := a_4\,x^4 + a_3\,x^3 + a_2\,x^2 + a_1\,x + a_0$, where $a_0,a_1,a_2,a_3$ and $a_4$ are real numbers. Then  \begin{equation}  \label{eq:mu_univar}
\mu_f(x,t) = f(x) + \frac{t^2}{6}\,f''(x) + \frac{a_4\,t^4}{5}= f(x) + \frac{t^2}{6}\,f''(x) + \frac{t^4}{120}f^{(4)}(x)\,.
\end{equation}
\end{lemma}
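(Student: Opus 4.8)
The statement is a direct computation: plug $f(x)=a_4 x^4 + a_3 x^3 + a_2 x^2 + a_1 x + a_0$ into the definition \eqref{Steklov} with $n=1$, and evaluate the single integral
\[
\mu_f(x,t) = \frac{1}{2t}\int_{x-t}^{x+t} f(\tau)\,d\tau .
\]
First I would exploit linearity: it suffices to compute $\frac{1}{2t}\int_{x-t}^{x+t}\tau^k\,d\tau$ for $k=0,1,2,3,4$. A clean way to organize this is to substitute $\tau = x + s$ so the integral becomes $\frac{1}{2t}\int_{-t}^{t}(x+s)^k\,ds$, then expand $(x+s)^k$ by the binomial theorem. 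The odd powers of $s$ integrate to zero over the symmetric interval $[-t,t]$, which is exactly the mechanism that kills the ``noise'' and leaves only even-order corrections — this is the conceptual heart of why \eqref{eq:mu_univar} has the form it does.

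**Key steps.** Carrying this out: for each monomial $x^k$ in $f$, the averaged monomial is $\frac{1}{2t}\int_{-t}^{t}(x+s)^k ds = \sum_{j\ \mathrm{even}} \binom{k}{j} x^{k-j}\,\frac{t^{j}}{j+1}$. For $k\le 4$ only $j=0,2,4$ contribute, giving $x^k + \binom{k}{2}\frac{t^2}{3}x^{k-2} + \binom{k}{4}\frac{t^4}{5}x^{k-4}$. Summing against the coefficients $a_k$, the $j=0$ terms reassemble $f(x)$; the $j=2$ terms give $\frac{t^2}{3}\bigl(6a_4 x^2 + 3a_3 x + a_2\bigr) = \frac{t^2}{6}f''(x)$ since $f''(x) = 12 a_4 x^2 + 6 a_3 x + 2 a_2$; and the $j=4$ term appears only from $k=4$, yielding $a_4\,\frac{t^4}{5}$. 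Finally I would note $f^{(4)}(x) = 24 a_4$, so $\frac{a_4 t^4}{5} = \frac{t^4}{120} f^{(4)}(x)$, which gives the second equality and displays the pattern (the Taylor-like expansion in $t$ with coefficients $1$, $1/6$, $1/120$ on the derivatives of order $0,2,4$).

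**Main obstacle.** There is essentially no obstacle here — it is a finite, elementary computation. The only things to be careful about are bookkeeping: getting the combinatorial factors $\binom{k}{j}/(j+1)$ right and correctly matching the regrouped sums to $f''$ and $f^{(4)}$. I would present it compactly by remarking that the general identity $\mu_f(x,t) = \sum_{j\ge 0} \frac{t^{2j}}{(2j+1)!}\,f^{(2j)}(x)$ (the sum being finite for a polynomial) follows from the same symmetric-interval argument applied term by term, and then specializing to $\deg f \le 4$ gives \eqref{eq:mu_univar} immediately.
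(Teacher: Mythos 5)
Your proposal is correct and follows essentially the same route as the paper, which simply substitutes $f$ into the $n=1$ Steklov integral, integrates, expands, and rearranges; your shift $\tau=x+s$ with the symmetric-interval cancellation of odd powers is just a cleaner organization of that computation, and your coefficients $\binom{k}{j}t^j/(j+1)$ and the regrouping into $f$, $\tfrac{t^2}{6}f''$, $\tfrac{t^4}{120}f^{(4)}$ all check out. The closing remark that $\mu_f(x,t)=\sum_{j\ge 0}\frac{t^{2j}}{(2j+1)!}f^{(2j)}(x)$ is a nice (correct) generalization, though not needed for the lemma.
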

\begin{proof}
The second equality follows from the fact that $f^{(4)}(x)=24 a_4$. The first equality in \eqref{Steklov} with $n=1$ becomes
\begin{equation}  \label{Steklov_univar}
\mu_f(x,t) := \frac{1}{2t}\,\int_{x-t}^{x+t} f(\tau)\,d\tau\,.
\end{equation}
Substituting $f$ into \eqref{Steklov_univar}, integrating, expanding and rearranging, yields the first equality in \eqref{eq:mu_univar}.
\end{proof}

\begin{theorem}[Steklov Polynomial]  \label{thm:mu_multivar}
If $f:\dR^n\to \dR$ is a MQP, then $\mu_f$, defined as in \eqref{Steklov} with $n\ge1$, can be written as
\begin{equation}  \label{eq:mu_multivar}
\mu_f(x,t) = f(x) + \frac{t^2}{6}\,\sum_{i=1}^n f_{ii}(x) + \left(\frac{1}{120} \sum_{i=1}^n f_{iiii} + \frac{1}{36}\,\sum_{\substack{i,j=1 \\ j>i}}^n f_{iijj}\right) t^4\,,
\end{equation}
where $x\in\dR^n$, $t>0$, $f_{ii} := \partial^2 f/\partial x_i^2$, and $f_{iijj} := \partial^4 f/\partial x_i^2\partial x_j^2$, noting that $f_{iijj}$ are constant for all $i,j$.
\end{theorem}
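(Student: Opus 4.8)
The plan is to reduce the multivariate integral in \eqref{Steklov} to a product of one-dimensional integrals and then invoke the univariate formula \eqref{eq:mu_univar} repeatedly, tracking which quartic monomials survive the averaging. First I would write a general MQP as $f(x)=\sum_{\alpha} c_\alpha x^\alpha$ with $|\alpha|\le 4$, and observe that by linearity of the integral it suffices to compute $\mu_{x^\alpha}(x,t)$ for each monomial $x^\alpha=x_1^{\alpha_1}\cdots x_n^{\alpha_n}$. Because the domain of integration in \eqref{Steklov} is the product box $\prod_{i=1}^n [x_i-t,x_i+t]$ and the monomial factors as $\prod_i x_i^{\alpha_i}$, the integral splits: $\mu_{x^\alpha}(x,t)=\prod_{i=1}^n \left(\frac{1}{2t}\int_{x_i-t}^{x_i+t}\tau_i^{\alpha_i}\,d\tau_i\right)$. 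Each one-dimensional factor $g_k(x_i,t):=\frac{1}{2t}\int_{x_i-t}^{x_i+t}\tau_i^{k}\,d\tau_i$ is elementary: $g_0=1$, $g_1=x_i$, $g_2=x_i^2+t^2/3$, $g_3=x_i^3+x_i t^2$, $g_4=x_i^4+2x_i^2 t^2+t^4/5$ (these are exactly the ingredients behind Lemma~\ref{lem:mu_univar}).

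The key step is then to collect, across all monomials of $f$, the contributions at each power of $t$. The $t^0$ term reassembles $f(x)$ itself. The $t^2$ term picks up, for each variable $x_i$, the $t^2/3$ coming from $g_2$ acting on any factor $x_i^2$ (from a degree-$2$ monomial in $x_i$), the $x_it^2$ from $g_3$ (degree-$3$ in $x_i$), and the $2x_i^2 t^2$ from $g_4$ (degree-$4$ in $x_i$); one checks these are precisely $\tfrac{1}{6}\partial^2/\partial x_i^2$ applied to $f$, summed over $i$, giving $\tfrac{t^2}{6}\sum_i f_{ii}(x)$. The $t^4$ term has two sources: the $t^4/5$ from a single $g_4$ factor (a pure quartic $x_i^4$ monomial), contributing $\tfrac{1}{120}\sum_i f_{iiii}$ since $f_{iiii}=24\,c_{x_i^4}$; and the cross term $(t^2/3)(t^2/3)=t^4/9$ arising when two distinct variables $x_i,x_j$ each contribute their $g_2$ quadratic correction, which only happens for monomials containing $x_i^2x_j^2$, contributing $\tfrac{1}{9}c_{x_i^2x_j^2}=\tfrac{1}{36}f_{iijj}$ for each unordered pair $i<j$. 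Summing gives exactly the bracketed coefficient of $t^4$ in \eqref{eq:mu_multivar}. Finally, since $f$ is quartic, $f_{iiii}$ and $f_{iijj}$ are constants, so no higher powers of $t$ appear and the stated formula is complete.

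The main obstacle — really a bookkeeping hazard rather than a conceptual one — is making sure every quartic monomial type is accounted for exactly once and with the correct combinatorial weight when assembling the $t^2$ and $t^4$ coefficients: for instance a monomial like $x_i^2 x_j$ contributes to the $t^2$ term through $g_2$ on the $x_i^2$ factor while $g_1$ leaves the $x_j$ factor untouched, and a monomial $x_i^3 x_j$ likewise contributes via $g_3$. I would organize the verification by the multidegree $(\alpha_i)$ restricted to the "active" variables and confirm in each case that the harvested $t$-coefficients match the corresponding partial derivatives; the identities $f_{ii}=\partial^2 f/\partial x_i^2$, $f_{iiii}=\partial^4 f/\partial x_i^4$, and $f_{iijj}=\partial^4 f/\partial x_i^2\partial x_j^2$ then package the result cleanly. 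As a sanity check, setting $n=1$ must recover Lemma~\ref{lem:mu_univar}, and it does: only the $g_4$-induced $t^4/5$ survives, matching $a_4 t^4/5$.
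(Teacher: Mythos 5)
Your proof is correct, and it takes a genuinely different route from the paper. You exploit the fact that the Steklov average of a monomial $x^\alpha$ over the product box $\prod_i[x_i-t,x_i+t]$ factors into one--dimensional averages $g_{\alpha_i}(x_i,t)$, and then you collect the $t^0$, $t^2$ and $t^4$ coefficients directly; the identities $g_k(x_i,t)-x_i^k=\frac{t^2}{6}(x_i^k)''+\frac{t^4}{120}(x_i^k)^{(4)}$ for $k\le 4$ make the bookkeeping close cleanly, and your accounting of the two $t^4$ sources (a single $g_4$ factor for $x_i^4$, and a pair of $g_2$ factors for $x_i^2x_j^2$, with $f_{iiii}=24c_{x_i^4}$ and $f_{iijj}=4c_{x_i^2x_j^2}$) is exactly right. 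The paper instead argues by induction on the dimension $n$: it peels off the last variable by Fubini, writes the $n$--fold integral as a univariate Steklov average of the $(n-1)$--dimensional one, applies the inductive hypothesis to the inner function and Lemma~\ref{lem:mu_univar} to the outer integral, and recombines the terms $T_1,\dots,T_4$. Your approach buys a self-contained, ``closed-form'' derivation that makes transparent \emph{why} only the diagonal fourth derivatives $f_{iiii}$ and the mixed ones $f_{iijj}$ appear (they are the only monomial types whose box average produces a $t^4$ term); the paper's induction buys a shorter formal verification that reuses the univariate lemma as a black box and avoids enumerating monomial types, at the cost of hiding the combinatorial origin of the coefficients $1/120$ and $1/36$. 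Both are complete proofs of the same identity.
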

\begin{proof}
We provide a proof by induction.  The basis of induction is given by $n=1$ and Lemma~\ref{lem:mu_univar}: For $n = 1$, we have
\[
\frac{1}{120} \sum_{i=1}^1 f_{iiii} + \frac{1}{36}\,\sum_{\substack{i,j=1 \\ j>i}}^1 f_{iijj}= \frac{1}{120} f^{(4)}(x)=\frac{a_4}{5}\,,
\]
because the second term on the left hand side above is zero. Using also the fact that 
\[
\frac{t^2}{6}\,\sum_{i=1}^1 f_{ii}(x)= \frac{t^2}{6}\,f^{''}(x),
\]
we see that  \eqref{eq:mu_multivar} reduces to  \eqref{eq:mu_univar}. This establishes the base case. Next, suppose that \eqref{eq:mu_multivar} holds for $n=k-1$. Namely, for every quartic polynomial $g:\dR^{k-1}\to \dR$ we have that
\begin{eqnarray} 
\mu_g(x,t) &=& g(x) + \frac{t^2}{6}\,\sum_{i=1}^{k-1} g_{ii}(x) + \left(\frac{1}{120} \sum_{i=1}^{k-1} g_{iiii} +\frac{1}{36}\,\sum_{\substack{i,j=1 \\ j>i}}^{k-1} g_{iijj}\right) t^4\,,\label{eq:mu_multivar_induct}
\end{eqnarray}
where $x\in\dR^{k-1}$, is true. What remains to do is use \eqref{eq:mu_multivar_induct} and show that \eqref{eq:mu_multivar} for $n=k$ holds. Let $f:\dR^k\to \dR$ be a MQP.  With this $f$, Equation \eqref{Steklov} can be rewritten as
\begin{equation}  \label{eq:mu_multivar_induct1}
\mu_f(x,t) = \frac{1}{2t}\int_{x_k-t}^{x_k+t}\left(\frac{1}{(2t)^{k-1}}\,\int_{x_{k-1}-t}^{x_{k-1}+t}\cdots\int_{x_1-t}^{x_1+t} f(\tau_1,\ldots,\tau_{k-1},\tau_k)\,d\tau_1\cdots d\tau_{k-1}\right) d\tau_k\,.
\end{equation}
For fixed $s\in \dR$, define $g_s:\dR^{k-1}\to \dR$ as $g_s(z_1,\ldots,z_{k-1}):=f(z_1,\ldots,z_{k-1},s)$. Note that $g_s$ is a MQP defined in $\dR^{k-1}$. Denote by $\hat x:=(x_1,\ldots,x_{k-1})$ the vector consisting of the first $k-1$ coordinates of $x$. By induction hypothesis \eqref{eq:mu_multivar_induct} applied to $g_s$ we can write,
\begin{equation}\label{IH}
\begin{array}{rcl}
\displaystyle\mu_{g_s}(\hat x,t) &=& \displaystyle  g_s(\hat x) + \frac{t^2}{6}\,\sum_{i=1}^{k-1} {(g_s)}_{ii}(\hat x) + \left(\frac{1}{120} \sum_{i=1}^{k-1} {(g_s)}_{iiii} +\frac{1}{36}\,\sum_{\substack{i,j=1 \\ j>i}}^{k-1} {(g_s)}_{iijj}\right) t^4\,,
\end{array}
\end{equation}
Using \eqref{Steklov} and the definition of $g_s$, we have that
\[
\begin{array}{rcl}
\displaystyle\mu_{g_s}(\hat x,t) &=&\displaystyle\frac{1}{(2t)^{k-1}}\,\int_{x_{k-1}-t}^{x_{k-1}+t}\cdots\int_{x_1-t}^{x_1+t} g_s(\tau_1,\ldots,\tau_{k-1})\,d\tau_1\cdots d\tau_{k-1}\\
&&\\
&=&\displaystyle \frac{1}{(2t)^{k-1}}\,\int_{x_{k-1}-t}^{x_{k-1}+t}\cdots\int_{x_1-t}^{x_1+t} f(\tau_1,\ldots,\tau_{k-1},s)\,d\tau_1\cdots d\tau_{k-1}
\end{array}
\]
{Combine the above expression with} \eqref{eq:mu_multivar_induct1} and \eqref{IH} to obtain

\begin{equation}  \label{eq:mu_multivar_induct1b}
\begin{array}{rcl}
&&\hspace{-9mm}\mu_f(x,t) = \displaystyle \frac{1}{2t}\int_{x_k-t}^{x_k+t} \mu_{g_{s}}(\hat x,t)  ds\\
&&\\
&=&\displaystyle \frac{1}{2t}\int_{x_k-t}^{x_k+t} \left( g_{s}(\hat x) + \frac{t^2}{6}\,\sum_{i=1}^{k-1} {(g_{s})}_{ii}(\hat x) + \left(\frac{1}{120} \sum_{i=1}^{k-1} {(g_{s})}_{iiii} +\frac{1}{36}\,\sum_{\substack{i,j=1 \\ j>i}}^{k-1} {(g_{s})}_{iijj}\right) t^4\right)   ds\\
&&\\
&=&\displaystyle \frac{1}{2t}\int_{x_k-t}^{x_k+t} g_{s}(\hat x) ds  + \frac{t^2}{6}\,\sum_{i=1}^{k-1}\frac{1}{2t}\int_{x_k-t}^{x_k+t} {(g_{s})}_{ii}(\hat x)  ds \\
&&\\
&&\hspace{33mm} +\ \displaystyle\frac{t^4}{120} \sum_{i=1}^{k-1} \frac{1}{2t}\int_{x_k-t}^{x_k+t} {(g_{s})}_{iiii}ds +\frac{t^4}{36}\,\sum_{\substack{i,j=1 \\ j>i}}^{k-1}\frac{1}{2t}\int_{x_k-t}^{x_k+t} {(g_{s})}_{iijj}   ds\\
&&\\
&=&\displaystyle \frac{1}{2t}\int_{x_k-t}^{x_k+t} f(\hat x,s) ds  + \frac{t^2}{6}\,\sum_{i=1}^{k-1}\frac{1}{2t}\int_{x_k-t}^{x_k+t} {f}_{ii}(\hat x,s)  ds \\
&&\\
&&\hspace{32mm} +\ \displaystyle \frac{t^4}{120} \sum_{i=1}^{k-1} \frac{1}{2t}\int_{x_k-t}^{x_k+t} {f}_{iiii}(\hat x,s) ds +\frac{t^4}{36}\,\sum_{\substack{i,j=1 \\ j>i}}^{k-1}\frac{1}{2t}\int_{x_k-t}^{x_k+t} {f}_{iijj}(\hat x,s)   ds\\
&&\\
&=& \displaystyle T_1 + T_2 +T_3+T_4,
\end{array}
\end{equation}

where 
\begin{itemize}
\item[] $T_1:=\displaystyle \frac{1}{2t}\int_{x_k-t}^{x_k+t} f(\hat x,s) ds, \quad T_2:=\displaystyle\frac{t^2}{6}\,\sum_{i=1}^{k-1}\frac{1}{2t}\int_{x_k-t}^{x_k+t} {f}_{ii}(\hat x,s)  ds $, 
\item[] $T_3:= \displaystyle\frac{t^4}{120} \sum_{i=1}^{k-1} \frac{1}{2t}\int_{x_k-t}^{x_k+t} {f}_{iiii}(\hat x,s) ds $, and  $T_4:=\displaystyle\frac{t^4}{36}\,\sum_{\substack{i,j=1 \\ j>i}}^{k-1}\frac{1}{2t}\int_{x_k-t}^{x_k+t} {f}_{iijj}(\hat x,s) ds$.
\end{itemize}
The second to last equality in \eqref{eq:mu_multivar_induct1b} is obtained by replacing the dummy variable $\tau_k$ by $s$ and using the definition of $g_s$.
Since $f$ is a MQP, we have that all derivatives of order 4 are constant. This implies that
\[
\begin{array}{rcl}
T_3=\displaystyle\dfrac{t^4}{120} \sum_{i=1}^{k-1}   f_{iiii},&\hbox{ and }& T_4=\displaystyle\dfrac{t^4}{36}\,\sum_{\substack{i,j=1 \\ j>i}}^{k-1} {f}_{iijj}.
\end{array}
\]
We proceed now to compute $T_1$ and $T_2$.   Define $h:\dR\to\dR$ such that $h(s):= f(x_1,\ldots,x_{k-1},s)=f(\hat x,s)$.
The definition of $h$ yields $h(x_k)=f(x)$, $h^{''}(x_k)= f_{kk}(x)$ and $h^{(4)}(x_k)= f_{kkkk}(x)$.
\[
\begin{array}{rcl}
T_1&=&\displaystyle \frac{1}{2t}\int_{x_k-t}^{x_k+t} f(\hat x,s) ds=\displaystyle \frac{1}{2t}\int_{x_k-t}^{x_k+t} h(s) ds=\mu_h(x_k,t).\\
\end{array}
\]
Hence, by Lemma \ref{lem:mu_univar} applied to $h$ we have 
\begin{equation}\label{T1}
\begin{array}{rcl}
T_1&=&\displaystyle\mu_h(x_k,t)= h(x_k) + \frac{t^2}{6}\,h''(x_k) + \frac{t^4}{120}h^{(4)}(x_k)\\
&&\\
&=& \displaystyle f(x)+ \frac{t^2}{6}\,f_{kk}(x) + \frac{t^4}{120}f_{kkkk}.\\
\end{array}
\end{equation}
Now let us compute $T_2$. Define $\theta^i:\dR\to\dR$ such that $\theta^i(s):= f_{ii}(x_1,\ldots,x_{k-1},s)=f_{ii}(\hat x,s)$.
The definition of $\theta^i$ yields $\theta^i(x_k)=f_{ii}(x)$, $(\theta^i)^{''}(x_k)= f_{iikk}$ and 
$(\theta^i)^{(4)}(x_k)= f_{iikkkk}(x)=0$. Hence, by Lemma \ref{lem:mu_univar} applied to $\theta^i$ we have 
\begin{equation}\label{T2}
\begin{array}{rcl}
T_2&=&\displaystyle\frac{t^2}{6}\,\sum_{i=1}^{k-1} \mu_{\theta^i}(x_k,t)=\displaystyle\frac{t^2}{6}\,\sum_{i=1}^{k-1} \left({\theta^i}(x_k) + \frac{t^2}{6}\,{\theta^i}''(x_k) + \frac{t^4}{120}{\theta^i}^{(4)}(x_k)\right)\\
&&\\
&=&\displaystyle\frac{t^2}{6}\,\sum_{i=1}^{k-1} \left(f_{ii}(x) + \frac{t^2}{6}\,f_{iikk}\right)=\displaystyle\frac{t^2}{6}\,\sum_{i=1}^{k-1} f_{ii}(x) +  \frac{t^4}{36}\,\,\sum_{i=1}^{k-1}f_{iikk}.\\
\end{array}
\end{equation}
Combining these four terms we obtain,
\begin{equation}  \label{eq:mu_multivar_induct1c}
\begin{array}{rcl}
\mu_f(x,t) &=&\displaystyle T_1 + T_2 +T_3+T_4\\
&&\\
&=&\displaystyle f(x)+ \frac{t^2}{6}\,f_{kk}(x) + \frac{t^4}{120}f_{kkkk}+\frac{t^2}{6}\,\sum_{i=1}^{k-1} f_{ii}(x) +  \frac{t^4}{36}\,\,\sum_{i=1}^{k-1}f_{iikk}\\
&&\\
&&+\displaystyle\frac{t^4}{120} \sum_{i=1}^{k-1}   f_{iiii}+\frac{t^4}{36}\,\sum_{\substack{i,j=1 \\ j>i}}^{k-1} {f}_{iijj}\\
&&\\
&=& \displaystyle f(x)+ \frac{t^2}{6}\,\sum_{i=1}^{k} f_{ii}(x) + \frac{t^4}{120} \sum_{i=1}^{k}  f_{iiii} +
 \frac{t^4}{36}\,\sum_{\substack{i,j=1 \\ j>i}}^{k} {f}_{iijj},
\end{array}
\end{equation}
which is \eqref{eq:mu_multivar} with $n=k$.  This completes the proof.
\end{proof}

\begin{remark}  \rm
Let $f:\dR^n\to \dR$ be a MQP and consider $\mu_f:\dR^n\times (0,+\infty)\to \dR$ its Steklov convexification, as given in \eqref{Steklov}. From Theorem~\ref{thm:mu_multivar}, we deduce that
\begin{eqnarray}
 \nabla_x\mu_f(x,t) &=& \nabla f(x) + \frac{t^2}{6}\sum_{i=1}^n \nabla f_{ii}(x)\in \dR^n\,,  \label{mu_grad}  \\[2mm]
\nabla_{xx}\mu_f(x,t) &=& \nabla^2f(x) + \frac{t^2}{6}\sum_{i=1}^n \nabla^2 f_{ii}(x)\,, \nonumber  \\[2mm]
& =& \nabla^2f(x) + \frac{t^2}{6}\sum_{i=1}^n \nabla^2 f_{ii}\in \dR^{n\times n}\,,\label{mu_xx}\\
 \nabla_{tx}\mu_f(x,t) &=& \frac{t}{3}\sum_{i=1}^n \nabla f_{ii}(x)\in \dR^n\,, \label{mu_tx}
\end{eqnarray}
where $\nabla_x \mu_f$ is the gradient of $\mu_f$ w.r.t. the variable $x$, $\nabla f$ is the gradient of $f$, $\nabla ^2 f$ is the Hessian of $f$, and $\nabla_{xx} \mu_f$ is the Hessian of $\mu_f$ w.r.t. the variable $x$.  Furthermore, $ \nabla_{tx}\mu_f$ is the partial derivative w.r.t. the variable $t$, of the vector $ \nabla_x\mu_f$; namely
\[
 \nabla_{tx}\mu_f(x,t)= \sum_{j=1}^n   \dfrac{\partial^2 \mu_f(x,t)}{\partial t \partial x_j}  e^j,
\]
where $\{e^1,\ldots,e^n\}$ is the canonical basis in $\dR^n$. Note that $\nabla^2 f_{ii}(x)=\nabla^2 f_{ii}$ is a constant matrix.
\proofbox
\end{remark}

The following notation and definitions will be used in the sequel. Given $B\in \dR^{n\times n}$, denote by $N(B):=\{x\in \dR^n\::\: Bx=0\},$ the null space of the matrix $B$. Given $A,B$ symmetric matrices of the same size, we write $A\succ B$  if and only if $A-B$ is positive definite. We say that $A\succeq B$  if and only if $A-B$ is positive semidefinite. Given $p\in \dN$, denote by $S_p:=\{\alpha \in \dR^p\::\: \|\alpha\|_2:=\sum_{i=1}^p (\alpha_i)^2=1\}$ the unit sphere in $\dR^p$. 

\begin{remark}\label{rem:ND}\rm
Consider the matrix
\begin{equation}  \label{C}
C:=\sum_{i=1}^n \nabla^2 f_{ii}\,.
\end{equation}
Namely, $C$ is the matrix appearing in the second term of the right hand side of \eqref{mu_xx}.  We have two possibilities: either $C\succeq 0$ or  $C\not\succeq 0$. In the latter case, we claim that there is no $t_0>0$ and no $L>0$ such that $\mu_f$ is convex over $B[0,L]\times (t_0,+\infty)$.  More precisely, for every $x\in \dR^n$ there exists $\bar t>0$ such that $\nabla_{xx}\mu_f(x,\bar t)\not\succeq 0$. Indeed, let $a<0$ be an eigenvalue of $C$ and $v$ a corresponding eigenvector with $\|v\|=1$.  Using  \eqref{mu_xx} we can write for every $x\in \dR^n$,
\[
v^T \nabla_{xx}\mu_f(x,t) v= v^T \nabla^2 f(x) v + \frac{t^2}{6} v^T C v = 
v^T \nabla^2 f(x) v +\frac{a\,t^2}{6}\,.
\] 
Since the first term in the rightmost expression is constant (for a fixed $x$) and the second term is negative, there is always a value of $t$ that makes the right hand side negative. Hence, in this case it is not possible to make $\mu_f$ convex. Consequently, it only makes sense to consider the case in which $C\succeq 0$. In a similar way, we see from the expression above that if $C=0$ then $\mu_f$ is convex over  $B[0,L]\times (t_0,+\infty)$ if and only if $f$ is convex over $B[0,L]$. Again, this case is not relevant to us, because we want to consider the case in which $f$ is not convex. Altogether, the relevant case to study is when $0\not=C\succeq 0$.
\proofbox
\end{remark}

For future use, we set up here the notation related with the spectral decomposition of the matrix $C$ given in \eqref{C}. Since $C$ is symmetric, the spectral decomposition theorem ensures that $C$ has $n$ real eigenvalues, with corresponding eigenvectors forming an orthogonal basis ${\cal B}:=\{v_1,\ldots,v_n\}$. We assume that each eigenvector $v_i$ has an associated eigenvalue $\lambda_i(C)$, $i = 1,\ldots,n$, such that $\lambda_1(C)\ge \ldots \ge \lambda_n(C)$.  In view of Remark \ref{rem:ND}, we assume from now on that $\lambda_n(C)\ge 0$ (i.e., $C$ is positive semidefinite) and that $\lambda_1(C)>0$ (i.e., $C\not=0$).  Set $r:={\rm dim\,}N(C)^{\perp}\ge 1$ (i.e., $r=n$ when  $C$ is nonsingular, and $r<n$ when $C$ is singular). When $r<n$, we have $N(C)={\rm span}[v_{r+1},\ldots,v_n]$. In this case, we denote as ${\cal N}:=\{v_{r+1},\ldots,v_n\}$ the orthonormal basis of $N(C)$ formed by eigenvectors of $C$.

Given a twice continuously differentiable function $f:\dR^n\to \dR$, and a set ${\cal A}:=\{w_1,\ldots,w_p\}$ of orthonormal vectors, we define the function $\varphi_{\cal A}:S_{p}\times\dR^n\to \dR$ as
\begin{equation}\label{fi}
\varphi_{\cal A}(\alpha,x):=\displaystyle \sum_{i=1}^p \alpha_i^2 w_i^T \nabla^2 f(x) w_i + 2\sum_{1\le i<j\le p }^p \alpha_i \,\alpha_j w_i^T \nabla^2 f(x) w_j. 
\end{equation}
Under the assumptions of $f$, $\varphi_{\cal A}$ is continuous.

\begin{lemma}\label{Lem:technical}
Let $f$ be a MQP and set $C=\sum_{i=1}^n \nabla^2 f_{ii}$. Fix $L>0$ and define $T:=B[0,L]\times (t_0,+\infty)$. 
\begin{itemize}
\item[(a)] Assume that $\lambda_n(C)= 0$ and take $\varphi_{\cal N}$ constructed as in \eqref{fi} for ${\cal A}={\cal N}=\{v_{r+1},\ldots,v_n\}$ the orthonormal basis of $N(C)$ formed by eigenvectors of $C$. Namely, $\varphi_{\cal N}:S_{n-r}\times\dR^n\to \dR$ is given by
\begin{eqnarray*}
\varphi_{\cal N}(\alpha,x) &=& \displaystyle \sum_{j,i=r+1}^n \alpha_i \,\alpha_j v_i^T \nabla^2 f(x) v_j  \\
	&=& \sum_{i=r+1}^n \alpha_i^2 v_i^T \nabla^2 f(x) v_i + 2\sum_{r+1\le i<j\le n }^n \alpha_i \,\alpha_j v_i^T \nabla^2 f(x) v_j\,. 
\end{eqnarray*}
Consider the following statements.
\begin{itemize}
 \item[(i)]  $\varphi_{\cal N}(\alpha,x)> 0$ for every $(\alpha,x)\in S_{n-r}\times B[0,L]$.
 \item[(ii)]  There exists $t_0>0$ such that $\mu_f$ is convex over the set $T$.
  \item[(iii)]  $\varphi_{\cal N}(\alpha,x)\ge 0$ for every $(\alpha,x)\in S_{n-r}\times B[0,L]$.
\end{itemize}
Then we have that $(i)\rightarrow(ii)\rightarrow(iii)$.

\item[(b)] Assume that $\lambda_n(C)>0$. Then there always exists $t_0>0$ such that $\mu_f$ is convex over the set $T$.
 
\end{itemize}
\end{lemma}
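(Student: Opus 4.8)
Throughout I read ``$\mu_f$ is convex over $T=B[0,L]\times(t_0,+\infty)$'' exactly as in the Remark preceding \eqref{C}, i.e. as $\nabla_{xx}\mu_f(x,t)\succeq 0$ for every $(x,t)\in T$ (equivalently, $\mu_f(\cdot,t)$ is convex on $B[0,L]$ for each fixed $t>t_0$). By \eqref{mu_xx} this amounts to $\nabla^2 f(x)+\tfrac{t^2}{6}\,C\succeq 0$ for all $x\in B[0,L]$ and $t>t_0$. Since $f$ is a MQP, $x\mapsto\nabla^2 f(x)$ is continuous, hence $K:=\max_{x\in B[0,L]}\|\nabla^2 f(x)\|<\infty$ and $\lambda_{\min}(\nabla^2 f(x))\ge-K$ on $B[0,L]$. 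Part (b) is then immediate: for $C\succ 0$ and any unit $v$, $v^T\bigl(\nabla^2 f(x)+\tfrac{t^2}{6}C\bigr)v\ge -K+\tfrac{t^2}{6}\lambda_n(C)$, which is $\ge 0$ as soon as $t^2\ge 6K/\lambda_n(C)$; so $t_0:=\sqrt{6K/\lambda_n(C)}$ (or any positive number if $K=0$) works.

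\textbf{Part (a), $(ii)\Rightarrow(iii)$.}
Assume $\nabla_{xx}\mu_f\succeq 0$ on $T$. Fix $(\alpha,x)\in S_{n-r}\times B[0,L]$ and any $t>t_0$, and set $w:=\sum_{i=r+1}^n\alpha_i v_i$. Since $\{v_{r+1},\ldots,v_n\}$ is orthonormal, $\|w\|=1$; and $w\in N(C)$, so $Cw=0$. Hence $0\le w^T\nabla_{xx}\mu_f(x,t)\,w=w^T\nabla^2 f(x)\,w+\tfrac{t^2}{6}w^TCw=w^T\nabla^2 f(x)\,w=\varphi_{\cal N}(\alpha,x)$, the last equality being \eqref{fi} expanded via the symmetry of $\nabla^2 f(x)$. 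This is (iii).

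\textbf{Part (a), $(i)\Rightarrow(ii)$ (the crux).}
Assume $\varphi_{\cal N}>0$ on $S_{n-r}\times B[0,L]$. As $\varphi_{\cal N}$ is continuous and $S_{n-r}\times B[0,L]$ is compact, $m:=\min_{S_{n-r}\times B[0,L]}\varphi_{\cal N}>0$. Put $V_1:=[v_1|\cdots|v_r]$ and $V_2:=[v_{r+1}|\cdots|v_n]$ (both nonempty, as $1\le r<n$ here). Every $z\in\dR^n$ splits orthogonally as $z=V_1\beta+V_2\gamma$ with $\|z\|^2=\|\beta\|^2+\|\gamma\|^2$; using $CV_2=0$ and $V_1^TCV_1=\operatorname{diag}(\lambda_1(C),\ldots,\lambda_r(C))$ gives $z^TCz=\beta^TV_1^TCV_1\beta\ge\lambda_r(C)\|\beta\|^2$ with $\lambda_r(C)>0$. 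Expanding $z^T\nabla^2 f(x)z$ in these coordinates and bounding $\|V_1^T\nabla^2 f(x)V_1\|\le K$, $\|V_1^T\nabla^2 f(x)V_2\|\le K$, and $\gamma^TV_2^T\nabla^2 f(x)V_2\gamma\ge m\|\gamma\|^2$ (the last because this quantity equals $\|\gamma\|^2\varphi_{\cal N}(\gamma/\|\gamma\|,x)$ when $\gamma\ne 0$, and is $0$ otherwise), I obtain, with $a:=\|\beta\|$ and $b:=\|\gamma\|$,
\[
z^T\nabla_{xx}\mu_f(x,t)\,z\;\ge\;\bigl(\tfrac{t^2}{6}\lambda_r(C)-K\bigr)a^2-2K\,a\,b+m\,b^2 .
\]
The right-hand side is the quadratic form of the symmetric $2\times 2$ matrix $\begin{pmatrix}\tfrac{t^2}{6}\lambda_r(C)-K & -K\\ -K & m\end{pmatrix}$, which is positive semidefinite once its determinant is $\ge 0$ (then, since $m>0$, its $(1,1)$ entry is automatically $\ge 0$), i.e. once $t^2\ge 6K(m+K)/(m\,\lambda_r(C))$. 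So $t_0:=\sqrt{6K(m+K)/(m\,\lambda_r(C))}$ makes $\nabla_{xx}\mu_f(x,t)\succeq 0$ for all $x\in B[0,L]$ and $t>t_0$, which is (ii).

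\textbf{Main obstacle.}
Everything except $(i)\Rightarrow(ii)$ is a one-line Rayleigh-quotient estimate. The actual work is in $(i)\Rightarrow(ii)$: one has to split $\dR^n=N(C)\oplus N(C)^\perp$, upgrade the pointwise positivity $\varphi_{\cal N}>0$ to a uniform bound $m>0$ over $B[0,L]$ (this is where compactness of $S_{n-r}$ and $B[0,L]$ together with the continuity of $\varphi_{\cal N}$ recorded after \eqref{fi} are used), and then absorb the indefinite cross term $-2K\,a\,b$ by inflating the $C$-driven coefficient $\tfrac{t^2}{6}\lambda_r(C)$ of $a^2$ — thereby reducing positive semidefiniteness of the $n\times n$ Hessian to that of an explicit $2\times 2$ matrix, from which $t_0$ is read off.
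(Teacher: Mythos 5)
Your proof is correct, and for the only nontrivial implication, $(i)\Rightarrow(ii)$, you take a genuinely different route from the paper. The paper argues by contradiction: negating (ii) produces sequences $t_k\to\infty$, $x_k\in B[0,L]$ and unit vectors $w_k$ with $w_k^T\nabla_{xx}\mu_f(x_k,t_k)w_k<0$; passing to convergent subsequences and observing that the term $\tfrac{t_k^2}{6}\sum_{j\le r}(\alpha_j^k)^2\lambda_j(C)$ must tend to zero forces the limit direction into $N(C)$, where $\varphi_{\cal B}$ collapses to $\varphi_{\cal N}$ and the strict positivity in (i) gives the contradiction. You instead split $z=V_1\beta+V_2\gamma$ along $N(C)^{\perp}\oplus N(C)$, use compactness only to extract the uniform constants $K$ and $m>0$, and absorb the cross term $-2Kab$ by making the coefficient $\tfrac{t^2}{6}\lambda_r(C)-K$ large, reducing everything to positive semidefiniteness of an explicit $2\times 2$ matrix. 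The two proofs use the same underlying mechanism (the $t^2$ term only controls the $N(C)^{\perp}$ component, so positivity on $N(C)$ must come from $\nabla^2 f$ itself), but yours is constructive: it yields the explicit threshold $t_0=\sqrt{6K(m+K)/(m\,\lambda_r(C))}$, whereas the paper's compactness argument does not, a limitation the authors themselves acknowledge immediately after the lemma (``The proof of Lemma~\ref{Lem:technical} is not constructive'') and only remedy in Theorem~\ref{convexity} under the stronger hypothesis $\lambda_n(C)>0$. Your $(ii)\Rightarrow(iii)$ and part (b) coincide with the paper's Rayleigh-quotient arguments. Two cosmetic points: under hypothesis (i) one automatically has $K>0$ (so your formula for $t_0$ is genuinely positive), and it is worth stating explicitly that nonnegativity of the $2\times 2$ form on all of $\dR^2$ is only needed on the quadrant $a,b\ge 0$ --- both are harmless.
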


\begin{proof}
\noindent (a) Write an arbitrary unit vector $v\in \dR^n$ as a linear combination of the orthonormal basis ${\cal B}$, i.e., $v=\sum_{j=1}^n \alpha_i v_i$ where $\alpha\in S_n$. Note that $\alpha$ is well defined because ${\cal B}$ is an orthonormal basis. Assume that (i) holds. We will show that there exists $t_0>0$ such that the matrix $\nabla_{xx} \mu_f(x,t)$ is positive semidefinite for every  $(x,t)\in B[0,L]\times (t_0,+\infty)$. To establish the latter, it is enough to show that $v^T\nabla_{xx} \mu_f(x,t)v\ge 0$ for every unit vector $v$. From \eqref{mu_xx} we can write
\begin{equation}\label{lem:eq2}
\begin{array}{rcl}
v^T\nabla_{xx} \mu_f(x,t)v&=&  v^T \nabla^2 f(x) v +\dfrac{t^2}{6}\,v^TCv\\
&=& \displaystyle  \sum_{j=1}^n \alpha_j^2 v_j^T \nabla^2 f(x) v_j +\displaystyle 2\sum_{1\le i<j\le n }^n \alpha_i \,\alpha_j v_i^T \nabla^2 f(x) v_j +\dfrac{t^2}{6}\, \sum_{j=1}^r \alpha_j^2 \lambda_j(C)\\
&=& \displaystyle \varphi_{\cal B}(\alpha,x)+\dfrac{t^2}{6}\, \sum_{j=1}^r \alpha_j^2 \lambda_j(C),
\end{array}
\end {equation}
 where $\varphi_{\cal B}$ is as in \eqref{fi} for ${\cal A}={\cal B}$ and $p=n$. In the expression above, we used the fact that $v=\sum_{j=1}^n \alpha_i v_i$ in the first two terms of the second equality and the fact that ${\cal B}$ is an orthonormal basis of eigenvectors with $N(C)={\rm span}[v_{r+1},\ldots,v_n]$ in the last term of the second equality.  Assume that (ii) is not true. This means that there exist sequences $(t_k)\subset (0,+\infty)$ and $(x_k)\subset B[0,L]$ such that $(t_k)$ is strictly increasing and tending to $\infty$ and such that $\nabla_{xx} \mu_f(x_k,t_k)\not\succeq 0$. The latter means that we can find $(w_k)\subset S_n$ (i.e., unit vectors) such that
\begin{equation}\label{lem:eq3}
w_k^T \nabla_{xx} \mu_f(x_k,t_k) w_k<0,
 \end{equation}
for all $k\in \dN$.  
%We claim that $w_k\not\in N(C)$. Indeed, write $w_k=\sum_{i=1}^n\alpha^k_i v_i$ with $\alpha^k\in S_{n}$. If $w_k\in N(C)$, we must have $\alpha^k_i=0$ for all $i\in \{1,\ldots,r\}$, which yields $\alpha^k\in S_{n-r}$. Now \eqref{lem:eq3} and expression \eqref{lem:eq2} for $v=w_k$ gives
% \[
%0> w_k^T \nabla_{xx} \mu_f(x_k,t_k) w_k=\displaystyle  \varphi_{\cal N}(\alpha_k,x_k),
%\] 
 %which contradicts (i). Therefore,  $w_k\not\in N(C)$ for every $k$. In this case, there exists $l\in \{1,\ldots,r\}$ such that $\alpha^k_l\not=0$ %for all $k$.  
 By boundedness, we can extract convergent subsequences of $(w_k)$, $(\alpha^k)$, and $(x_k)$ (which we still denote as the whole sequence for simplicity), with limits $w$, $\alpha$ and $\bar x$, respectively. Use \eqref{lem:eq2} for $v=w_k$, \eqref{lem:eq3}, and the continuity of $\varphi_{\cal B}$ to obtain
 \begin{equation}\label{lem:eq31}
\begin{array}{rcl}
0\ge \lim_{k\to \infty }w_k^T\nabla_{xx} \mu_f(x_k,t_k)w_k&= & 
\lim_{k\to \infty}\varphi_{\cal B}(\alpha^k,x_k)+ \lim_{k\to \infty} \dfrac{t_k^2}{6}\, \sum_{j=1}^r ({\alpha_j^k})^2 \lambda_j(C)\\
&&\\
&=& \varphi_{\cal B}(\alpha,\bar x) +\lim_{k\to \infty} \dfrac{t_k^2}{6}\, \sum_{j=1}^r ({\alpha_j^k})^2 \lambda_j(C)\\
%&\ge& \theta +\dfrac{t_k^2}{6}\, \sum_{j=1}^r ({\alpha_j^k})^2 \lambda_j(C).
&&\\
&\ge& \varphi_{\cal B}(\alpha,\bar x),
\end{array}
\end {equation}
where we used the continuity of $\varphi_{\cal B}$ in the second equality and the nonnegativity of the second term in the third inequality. Note that the summation  multiplying $t_k^2$ must go to zero because $t_k$ goes to $\infty$ and $\varphi_{\cal B}$ is bounded below in $S_n\times B[0,L]$. This means that $\alpha_j^k\to 0$ for all $i\in \{1,\ldots,r\}$.  This implies that 
$w\in N(C)$, $\alpha\in S_{n-r}$ and $ \varphi_{\cal B}(\alpha,\bar x)=\varphi_{\cal N}(\alpha,\bar x)$. Using this fact and \eqref{lem:eq31} we deduce that
\[
0\ge  \lim_{k\to \infty} w_k^T \nabla_{xx} \mu_f(x_k,t_k) w_k\ge \varphi_{\cal N}(\alpha,\bar x)>0,
\] 
a contradiction. Hence, (ii) holds. Assume now that (ii) holds, let us show (iii).  For $v\in N(C)$ we have that $\alpha_j=0$ for $j\in \{1,\ldots,r\}$  and \eqref{lem:eq2} gives
\[
\begin{array}{rcl}
0\le v^T\nabla_{xx} \mu_f(x,t)v&=& \displaystyle  \sum_{j=r+1}^n \alpha_j^2 v_j^T \nabla^2 f(x) v_j +\displaystyle 2\sum_{r+1\le i<j\le n }^n \alpha_i \,\alpha_j v_i^T \nabla^2 f(x) v_j,\\
&&\\
&=&\varphi_{\cal N}(\alpha,x)
\end{array}
\]
where the first inequality follows from (ii) and the second equality from the definition of $\varphi_{\cal N}$. Since $(\alpha,x)\in S_{n-r}\times B[0,L]$ is arbitrary, the above expression yields (iii).

\noindent(b) Using \eqref{lem:eq2} with $r=n$ we obtain
\[
\begin{array}{rcl}
v^T\nabla_{xx} \mu_f(x,t)v&=&\displaystyle \varphi_{\cal B}(\alpha,x)+\dfrac{t^2}{6}\, \sum_{j=1}^r \alpha_j^2 \lambda_j(C)\\
&&\\
&\ge & \displaystyle \varphi_{\cal B}(\alpha,x)+\dfrac{t^2}{6} \lambda_n(C)\ge  \displaystyle \theta+\dfrac{t^2}{6} \lambda_n(C),
\end{array}
\]
where $\theta$ is a lower bound of $\varphi_{\cal B}$ over the compact set $T$. Since $\lambda_n(C)>0$, we can always find $t$ large enough so as to make the right hand side positive over $T$. The proof is complete.
\end{proof}

\begin{example}\rm
In some situations, condition (i) in  Lemma \ref{Lem:technical} may hold for every $x\in \dR^n$.  Consider the generalized Rosenbrock function $f:\dR^n\to \dR$ (see \cite{Goldberg,Kok}) defined as
\[
f(x)=\sum_{i=1}^{n-1}(1-x_i)^2 +100(x_{i+1}-x_{i}^2)^2. 
\]
From \cite{Kok} it can be easily checked that
$C=2400\sum_{i=1}^{n-1} e^{i,i}$, where $e^{i,j}\in \dR^{n\times n}$ is the matrix with all zeroes except at the position $(i,j)$. So $N(C)={\rm span}[e^n]$, where $e^{i}\in \dR^{n}$ is the vector with all zeroes except at the position $i$. Therefore, ${\cal N}:=\{e^n\}$. Denote by $H(x):=\nabla^2 f(x)$. From \cite[Eq. 10]{Kok}) we have that
$[H(x)](n,n)=200$ (where $A(i,j)$ denotes the position $(i,j)$ of the matrix $A$). Hence, with the notation of Lemma \ref{Lem:technical}, we have 
\[
\varphi_{\cal N}(\pm 1,x)=200,
\]  
and condition (i) in  Lemma \ref{Lem:technical} holds for every $x\in \dR^n$.
\proofbox
\end{example}

The proof of Lemma \ref{Lem:technical} is not constructive. Namely, we know when we can expect to have $\mu_f$ convex, but we don't know what the required value of $t_0$ will be. Moreover, when $\lambda_n(C)=0$, convexification over $B[0,L]\times (t_0,+\infty)$ may not be possible unless $L$ verifies the conditions of Lemma \ref{Lem:technical}(a). The next result estimates $t_0$ for a given arbitrary $L$ when $C$ is positive definite. For this, we need the following Weyl's inequality:
\begin{equation}\label{Weyl}
\lambda_n(A+B)\ge \lambda_n(A) +\lambda_n(B),
\end{equation}
for every $A,B$ symmetric. This inequality follows easily using the Rayleigh quotient. We now state and prove our convexification result for the Steklov convexification $\mu_f$. 

\begin{theorem} [Steklov Convexification] \label{convexity}
With the notation of Lemma \ref{Lem:technical}, let $f$ be a MQP and $C=\sum_{i=1}^n \nabla^2 f_{ii}$. Assume that $\lambda_n(C)>0$ and fix $L>0$. Then,  $\mu_f$ is convex over
the set $B[0,L]\times (t_0,+\infty)$, with
\[
t_0:=\sqrt{\dfrac{6\,|\theta_L|}{\lambda_n(C)}},
\]
where $\theta_L:=\displaystyle\min_{x\in B[0,L]} \lambda_n(\nabla^2 f(x))$.
\end{theorem}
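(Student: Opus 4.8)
The plan is to reduce everything to the Hessian identity \eqref{mu_xx}, i.e. $\nabla_{xx}\mu_f(x,t)=\nabla^2 f(x)+\tfrac{t^2}{6}C$, and to show that its smallest eigenvalue stays nonnegative throughout $B[0,L]\times(t_0,+\infty)$. Since $B[0,L]$ is convex and $\mu_f(\cdot,t)$ is of class $C^2$, having $\nabla_{xx}\mu_f(x,t)\succeq 0$ for all $x\in B[0,L]$ is exactly convexity of $\mu_f(\cdot,t)$ over $B[0,L]$, in the sense already used in Lemma~\ref{Lem:technical}.

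First I would note that $x\mapsto\lambda_n(\nabla^2 f(x))$ is continuous on $\dR^n$ (the entries of $\nabla^2 f$ are polynomials, and eigenvalues depend continuously on a symmetric matrix), so over the compact ball $B[0,L]$ the minimum $\theta_L=\min_{x\in B[0,L]}\lambda_n(\nabla^2 f(x))$ is attained and $\lambda_n(\nabla^2 f(x))\ge\theta_L$ for every $x\in B[0,L]$. Then I would invoke Weyl's inequality \eqref{Weyl} with $A=\nabla^2 f(x)$ and $B=\tfrac{t^2}{6}C$; since $t^2/6>0$ we have $\lambda_n\bigl(\tfrac{t^2}{6}C\bigr)=\tfrac{t^2}{6}\lambda_n(C)$, whence
\[
\lambda_n\bigl(\nabla_{xx}\mu_f(x,t)\bigr)\ \ge\ \lambda_n(\nabla^2 f(x))+\tfrac{t^2}{6}\lambda_n(C)\ \ge\ \theta_L+\tfrac{t^2}{6}\lambda_n(C)
\]
for all $(x,t)\in B[0,L]\times(0,+\infty)$. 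Using $\lambda_n(C)>0$, for $t>t_0=\sqrt{6|\theta_L|/\lambda_n(C)}$ the right-hand side exceeds $\theta_L+\tfrac{t_0^2}{6}\lambda_n(C)=\theta_L+|\theta_L|\ge 0$ (and is trivially $\ge 0$ for any $t>0$ when $\theta_L\ge 0$). Hence $\nabla_{xx}\mu_f(x,t)\succeq 0$ on $B[0,L]\times(t_0,+\infty)$ — in fact positive definite there — which proves the theorem.

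I do not expect a genuine obstacle: once \eqref{mu_xx} and Weyl's inequality are available, the argument is a one-line eigenvalue estimate. The only points deserving a word of care are that the scalar $t^2/6$ multiplies the eigenvalues of $C$ (so the positive term really grows like $t^2$), that the substantive case is $\theta_L<0$, where $f$ itself is nonconvex on $B[0,L]$ and a strictly positive $t_0$ is needed, and that "convex over $B[0,L]\times(t_0,+\infty)$" must be read, as in Lemma~\ref{Lem:technical}, as convexity in $x$ over the convex ball $B[0,L]$ for each fixed admissible $t$.
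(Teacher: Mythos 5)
Your proof is correct and follows essentially the same route as the paper's: both reduce to the Hessian identity \eqref{mu_xx}, bound $\lambda_n(\nabla^2 f(x))$ below by $\theta_L$ via continuity and compactness of $B[0,L]$, and apply Weyl's inequality \eqref{Weyl} to get $\lambda_n(\nabla_{xx}\mu_f(x,t))\ge\theta_L+\tfrac{t^2}{6}\lambda_n(C)>0$ for $t>t_0$. The case split on the sign of $\theta_L$ and the conclusion of (strict) positive definiteness match the paper's argument exactly.
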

\begin{proof}
Let $H_f(x):=\nabla^2f(x)$ and $H_\mu(x,t):= H_f(x)+(t^2/6)\,C$. From \eqref{mu_xx} we have that $\nabla_{xx}\mu_f(x,t)=H_\mu(x,t)$. The function $\mu_f(\cdot,t)$ is convex over the set $B[0,L]$ for all $t> t_0$ if and only if $H_\mu(x,t)\succeq 0$ for all $(x,t)\in B[0,L]\times (t_0,+\infty)$.  Our aim is to find $t_0>0$ such that the latter holds. Define $v(x):=\lambda_n(H_f(x))$. Note that $v(\cdot)$ is a continuous function of $x$, so there exists $\theta_L\in \dR$ such that $v(x)\ge \theta_L$ for every $x\in B[0,L]$. Using \eqref{Weyl} we obtain
\[
\begin{array}{rcl}
\lambda_n(H_\mu(x,t))&=&\lambda_n\left(H_f(x)+\ds\frac{t^2}{6}\,C\right)\ge \lambda_n(H_f(x)) +\dfrac{t^2}{6}\,\lambda_n(C) \\
&&\\
&=&v(x)+\dfrac{t^2}{6}\,\lambda_n(C) \ge \theta_L+\dfrac{t^2}{6}\,\lambda_n(C)\,.\\
\end{array}
\]
By the assumption on $C$, $\lambda_n(C)>0$. Hence, if $\theta_L\ge 0$ then the right hand side of the expression above is always positive and in this case $\mu_f$ is convex over $B[0,L]\times (0,+\infty)$. If $\theta_L<0$, the expression above yields 
\[
\lambda_n(H_{\mu}(x,t))\ge \theta_L+\frac{t^2\,\lambda_n(C)}{6} =-|\theta_L|+\frac{t^2\,\lambda_n(C)}{6}\,.
\]
The right-hand side is positive if 
\[
t>\sqrt{\dfrac{6\,|\theta_L|}{\lambda_n(C)}}=:t_0>0.
\]
The above expression implies that all the eigenvalues of $H_\mu(x,t)$ are positive and hence $\mu_f$ is strictly convex over the set $B[0,L]\times (t_0,+\infty)$. The proof is complete.
\end{proof}

\begin{definition}\label{def:coercive}
A continuous function $f:\dR^n\to \dR$ is {\em coercive} if $\lim_{\|x\|\to \infty} f(x)=+\infty$.
\end{definition}

\begin{remark}\rm
Theorem \ref{convexity} is useful if $L>0$ is such that $\argmin{f}\subset B[0,L]$. So that we can use the Steklov function to convexify $f$ in the region where the global minima can be found. For arbitrary MQP, the value of $L$ may not be known. As we establish later, for some types of $f$, the value of $L$ as in Theorem \ref{convexity} can be explicitly computed. If $f$ is coercive, then $L$ always exist. Thus, we restrict our analysis to the coercive case. In \cite{QiJOGO}, the value of $L$ can be explicitly found for quartic normal polynomials with a quadratic essential factor (for more details, see \cite[Proposition 14]{QiJOGO}).
\proofbox
\end{remark}

\begin{proposition}[Limiting Functions]  \label{limits}
Fix $x\in \dR^n$.  One has that
\[
\begin{array}{l}
\lim_{t\to 0}\mu_f(x,t) = f(x)\,,\ \ \lim_{t\to 0}\nabla_x \mu_f(x,t) = \nabla f(x)\,,\ \ \lim_{t\to 0}\,
\nabla_{xx}\mu_f(x,t) = \nabla^2 f(x)\,,\\ 
\\
\lim_{t\to 0}\,\nabla_{tx} \mu_f(x,t) = 0\,.
\end{array}
\]
\end{proposition}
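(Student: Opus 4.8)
The plan is to obtain all four limits by evaluating at $t=0$ the explicit polynomial-in-$t$ expressions for $\mu_f$ and its derivatives that have already been established. Since this section treats the case of $f$ a MQP, Theorem~\ref{thm:mu_multivar} shows that, for fixed $x$, the map $t\mapsto\mu_f(x,t)$ is a polynomial in $t$ whose value at $t=0$ equals $f(x)$; continuity of polynomials then gives $\lim_{t\to0}\mu_f(x,t)=f(x)$.

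For the three remaining limits I would invoke the differentiated formulas \eqref{mu_grad}, \eqref{mu_xx} and \eqref{mu_tx} from the Remark following Theorem~\ref{thm:mu_multivar}. From \eqref{mu_grad}, $\nabla_x\mu_f(x,t)=\nabla f(x)+\frac{t^2}{6}\sum_{i=1}^n\nabla f_{ii}(x)$ is a vector-valued polynomial in $t$ with constant term $\nabla f(x)$, so the limit is $\nabla f(x)$. From \eqref{mu_xx}, $\nabla_{xx}\mu_f(x,t)=\nabla^2 f(x)+\frac{t^2}{6}\,C$ with $C$ a constant matrix, so the limit is $\nabla^2 f(x)$. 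From \eqref{mu_tx}, $\nabla_{tx}\mu_f(x,t)=\frac{t}{3}\sum_{i=1}^n\nabla f_{ii}(x)$ carries a factor $t$ and hence has no constant term, so the limit is $0$. In every case the limit is merely the evaluation at $t=0$ of a polynomial in $t$ (scalar-, vector-, or matrix-valued), so nothing delicate remains.

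There is no real obstacle here: the whole content is already packaged in Theorem~\ref{thm:mu_multivar} and the Remark that follows it, the only point worth flagging being that $\nabla_{tx}\mu_f$ vanishes in the limit because of its explicit factor of $t$. If instead one wanted a proof valid for an arbitrary continuous (respectively, $C^2$) function $f$, not necessarily quartic, one would return to the defining integral \eqref{Steklov}: the first limit holds because $\mu_f(x,t)$ is the average of $f$ over the cube $\prod_{i=1}^n[x_i-t,x_i+t]$, which shrinks to $\{x\}$ as $t\to0$, so by continuity of $f$ this average tends to $f(x)$; for $f\in C^2$ the remaining limits follow by differentiating under the integral sign in \eqref{Steklov} and repeating the averaging argument on $\partial f/\partial x_j$ and $\partial^2 f/\partial x_j\partial x_k$, again noting that the mixed derivative picks up a factor tending to $0$. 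Within the present quartic setting, however, the polynomial identities make the statement immediate.
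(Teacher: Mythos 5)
Your proof is correct and matches the paper's argument exactly: the paper also obtains all four limits by substituting $t=0$ into the polynomial identities \eqref{eq:mu_multivar} and \eqref{mu_grad}--\eqref{mu_tx}. The extra remarks on the general continuous/$C^2$ case are a nice aside but not needed here.
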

\begin{proof}
All facts are obtained by substitution of $t=0$ into equalities \eqref{eq:mu_multivar} and \eqref{mu_grad}--\eqref{mu_tx}.
\end{proof}

\begin{definition}\label{def:vectorpol}
For $n,p$ positive integers, we say that a function $\gamma:\dR^n\to \dR^p$ is a {\em vector valued polynomial} when
\[
\gamma(x)=\sum_{i=1}^p \gamma_i(x) e^i,
\]
where $\{e^1,\ldots,e^p\}$ is the canonical basis of $\dR^p$ and $\gamma_i:\dR^n\to \dR$ is a polynomial. We say that 
$\gamma$ is a vector valued {\em linear or quadratic polynomial} when $\gamma_i$ is linear or quadratic for all $i=1,\ldots,p$. \end{definition}

%Note that, if $g$ is regular, then it is coercive. The following proposition has a similar proof to that of Theorem 5 in \cite{Qi2003}.  The difference between the latter result and the next one is that the latter assumes $g$ to be regular, while we assume coercivity, and a growth relation between $g$ and $h$. 

The next result provides a value of $L>0$ such that $\argmin{f}\subset B[0,L]$ for a family of coercive MQPs.  It is an extension of \cite[Theorem 5]{Qi2003}. Indeed, in our theorem below 
we consider a function $f$ written as:
\begin{equation}\label{eq1:BL}
f(x)=g(x)^T\,Gg(x) + c^T h(x),
\end{equation}
where $g:\dR^n\to \dR^p$,  $h:\dR^n\to \dR^r$ are vector valued linear or quadratic polynomials, $G\in \dR^{p\times p}$ is positive definite, and $c\in \dR^r$.  In  \cite[Theorem 5]{Qi2003}, the authors assume $h(x)=g(x)$. Our proof, however, is just a slight adaptation of the one in \cite[Theorem 5]{Qi2003}, and we include it here for completeness. 

\begin{theorem}  \label{BL}
Let $f:\dR^n\to \dR$ be a MQP such that $f$ can be written as in \eqref{eq1:BL}. Denote by $\lambda$ the minimum eigenvalue of $G$. Assume further that 
\begin{itemize}
\item[(i)] There exists $L_1,R>0$ such that whenever $\|x\|>L_1$ we have
\[
\dfrac{\|h(x)\|}{\|g(x)\|}\le R.
\] 
Namely, $g$ grows (at least) as fast as $h$ for $\|x\|$ large enough.
\item[(ii)] 
\begin{itemize}
\item[(a)] If $f(0)\not=0$, define $L>L_1$ such that whenever $\|x\|>L$ we have
\[
\|g(x)\|> \max\left\{|f(0)|, \dfrac{1+\|c\| R}{\lambda}\right\}
\] 
\item[(b)] If $f(0)=0$, define $L>L_1$ such that whenever $\|x\|>L$ we have
\[
\|g(x)\|> \dfrac{\|c\| R}{\lambda}
\] 
\end{itemize}
\end{itemize}
Then, if $x^*$ is a global minimum of $f$, it satisfies $\|x^*\|\le L$. Namely, 
\[
\argmin{f}\subset B[0,L],
\]
\end{theorem}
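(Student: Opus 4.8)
The plan is to mimic the argument of \cite[Theorem 5]{Qi2003}: I will show that $f(x)>f(0)$ for every $x$ with $\|x\|>L$, which immediately forces any global minimizer $x^*$ (necessarily satisfying $f(x^*)\le f(0)$) to lie in $B[0,L]$. Note that existence of a global minimizer, and existence of an $L>L_1$ with the growth property demanded in (ii) (morally, $\|g(x)\|\to\infty$ as $\|x\|\to\infty$), are both presupposed in the statement rather than something to be proved.

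First I would derive a coercive-type lower bound for $f$ valid on all of $\dR^n$. Writing $f$ as in \eqref{eq1:BL} and using that $G$ is (symmetric) positive definite with least eigenvalue $\lambda>0$, one has $g(x)^T G\, g(x)\ge\lambda\,\|g(x)\|^2$; combining this with the Cauchy--Schwarz bound $c^T h(x)\ge-\|c\|\,\|h(x)\|$ gives $f(x)\ge\lambda\,\|g(x)\|^2-\|c\|\,\|h(x)\|$ for every $x$. The second step is to invoke hypothesis (i): for $\|x\|>L$ we also have $\|x\|>L_1$ (since $L>L_1$), hence $\|h(x)\|\le R\,\|g(x)\|$, and the bound sharpens to $f(x)\ge\|g(x)\|\bigl(\lambda\,\|g(x)\|-\|c\|\,R\bigr)$.

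The third step is the case split dictated by hypothesis (ii). In case (a), $f(0)\ne 0$ and $\|x\|>L$ give $\|g(x)\|>\max\{|f(0)|,\,(1+\|c\|R)/\lambda\}$, so $\lambda\,\|g(x)\|-\|c\|R>1$ while $\|g(x)\|>|f(0)|$, whence $f(x)>\|g(x)\|>|f(0)|\ge f(0)$. In case (b), $f(0)=0$ and $\|x\|>L$ give $\|g(x)\|>\|c\|R/\lambda\ge 0$, so $\|g(x)\|>0$ and $\lambda\,\|g(x)\|-\|c\|R>0$, hence $f(x)>0=f(0)$. In both cases $f(x)>f(0)$ for every $x$ outside $B[0,L]$. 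The final step is the contrapositive: if $x^*$ is a global minimizer then $f(x^*)\le f(0)<f(x)$ for all $x$ with $\|x\|>L$, so $\|x^*\|\le L$, i.e.\ $\argmin f\subset B[0,L]$.

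I do not expect a genuine obstacle here; the content is elementary once the (implicit) definitions of $L$ in (ii) are granted. The only points requiring care are bookkeeping: keeping every inequality in the chain strict so that the decisive comparison reads $f(x)>f(0)$ rather than merely $f(x)\ge f(0)$ (so that a minimizer is actually excluded from the exterior of the ball), and checking the degenerate sub-cases (e.g.\ $c=0$) still fit the same estimate. The adaptation of \cite[Theorem 5]{Qi2003} lies precisely in allowing $h\ne g$, which is exactly what hypothesis (i) is designed to accommodate.
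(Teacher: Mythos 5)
Your proposal is correct and follows essentially the same route as the paper: the lower bound $f(x)\ge\lambda\|g(x)\|^2-\|c\|\,\|h(x)\|$ via the least eigenvalue of $G$ and Cauchy--Schwarz, the sharpening to $f(x)\ge\|g(x)\|\bigl(\lambda\|g(x)\|-R\|c\|\bigr)$ using hypothesis (i), and the case split on $f(0)$ to conclude $f(x)>f(0)$ outside $B[0,L]$. No discrepancies worth noting.
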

\begin{proof}
Define $\xi:\dR^p\times \dR^r\to \dR$ as $\xi(y,z):= y^T\,Gy + c^T z$. By definition of $\lambda$ and Cauchy-Schwartz inequality we have that
\[
\xi(y,z)\ge \lambda \|y\|^2 - \|c\|\, \| z\|\,.
\]
Let $\|x\|>L_1$, with $L_1$ as in (i). The above expression, assumption (i) and the definition of $f$ yield
\[
\begin{array}{rcl}
f(x)&=&\xi(g(x),h(x))\ge \lambda \|g(x)\|^2 -\|c\|\, \|h(x)\|\\
&&\\
&=&  \|g(x)\|\left( \lambda \|g(x)\| -\|c\|\, \dfrac{\|h(x)\|}{\|g(x)\|}\right)\\
&&\\
&\ge &  \|g(x)\|\left(\lambda \|g(x)\| -R\|c\|\right)\,,
\end{array}
\]
where in the last inequality we used (i). Assume that (ii)(a) holds and take $x$ such that $\|x\|>L$. Note that the definition $L$ in (a) implies that $\|g(x)\|>|f(0)|>0$ and $(\lambda \|g(x)\| -R\|c\|)>1$.  Altogether, we find
\[
\begin{array}{rcl}
f(x)&\ge &  \|g(x)\|\left(\lambda \|g(x)\| -R\|c\|\right)> |f(0)|\ge f(0).
\end{array}
\]
Since $f(x)>f(0)$ for all $x$ such that $\|x\|>L$, we cannot have any global minimum outside $B[0,L]$. This completes the proof of case (a). Assume that (ii)(b) holds and take $x$ such that $\|x\|>L$. The definition $L$ in (b) implies that $\|g(x)\|>0$ and $(\lambda \|g(x)\| -R\|c\|)>0$. 

\[
\begin{array}{rcl}
f(x)&> &  \|g(x)\|\left( \lambda \|g(x)\| -R\|c\|\right)> 0= f(0)\,,
\end{array}
\]
and the proof follows now as in case (a).
\end{proof}

A simple situation in which conditions (i) and (ii) of Theorem \ref{BL} can be ensured is when $g$ is coercive, and grows ``more rapidly or as fast as'' $h$ when $\|x\|$ tends to infinity.

%\begin{example} \rm
%Consider the Rosenbrock function $f(x_1,x_2)=(1-x_1)^2 +100(x_2-x_1^2)^2$, so $f$ is as in Proposition \ref{BL}, with $g(x)=[(1-x_1); 10 (x_2-x_1^2)]$, $c=0=h(x)$ and $G=Id$.  Using the proposition, the value of $L$ can be computed as $L=2\sqrt{2}$. Indeed, if $|x_1|>2$ it is elementary to check that
%\[
%\begin{array}{rcl}
%\|g(x)\|^2& =& (1-x_1)^2  +100 (x_2-x_1^2)^2\ge (1-x_1)^2 >1\\
%&&\\
%&=&\max\left\{ 0, |f(0)|, \dfrac{1+\|c\| L_0}{\lambda} \right\}= \max\left\{ 0, 1\right\}=1.\\
%\end{array}
%\]
%So, for any $x_2$, we will have $\|g(x)\|>1$ as long as $|x_1|>2$. 
%\end{example}

\subsection{An example normal polynomial}
To illustrate Theorem \ref{convexity}, consider a polynomial $f:\dR^n\to \dR$ defined as
\begin{equation}  \label{normal_poly}
f(x)=\sum_{i=1}^n a_i x_i^4 + x^T B x + d^T x\,,
\end{equation}
where $a = (a_1,\ldots,a_n)\in \dR^n$, $B\in \dR^{n\times n}$ is a symmetric matrix, and $d\in \dR^n$.   It is easy to check that
\[
\nabla^2 f(x)= 12 \, {\rm diag}(a_1 x_1^2,\ldots, a_n x_n^2) +2B\,,
\]
and
\[
C=\sum_{i=1}^n \nabla^2 f_{ii} = 24\,{\rm diag}(a)\,.
\]
When $\min_{i=1,\ldots,n} a_i\ge 0$ and $B\succeq 0$, the above expression and Weyl's inequality imply
that $f$ is convex. If $B\not\!\succeq 0$, then $f$ is not convex.  Indeed, in this case we have $\nabla^2 f(0)\not\!\succeq 0$.  So the relevant case arises when $B\not\!\succeq 0$.  Satisfying the conditions of the theorem means that $\min_{i=1,\ldots,n} a_i>0$. Thus, $f$ is an example of a {\em normal} quartic polynomial, as defined in \cite{QiWanYang2004}.  Formula \eqref{mu_xx} in this case becomes
\[
\nabla_{xx}\mu_f(x,t)=12 \,{\rm diag}(a_1 x_1^2,\ldots, a_n x_n^2) + 2B + 4\,t^2 {\rm diag}(a)\,.
\]
Let $a_k:=\min_{i=1,\ldots,n} a_i>0$, so $\lambda_n(C)=24\,a_k$. Since $B\not\!\succeq 0$, $\lambda_n(B)<0$. As in the proof of Theorem~\ref{BL}, we can write
\[
\begin{array}{rcl}
\lambda_n(\nabla_{xx}\mu_f(x,t))&\ge& 12\displaystyle \min_{i=1,\ldots,n} a_i x_i^2 +2 \lambda_n(B) +4 t^2 a_k\\
&&\\
  & \ge  &   2 \lambda_n(B) + 4 t^2 a_k\,,
  \end{array}
\]
which is positive as long as
\begin{equation}  \label{normal_t0}
t>t_0:=\sqrt{\dfrac{|\lambda_n(B)|}{2\,a_k}}\,.
\end{equation}
In this case, convexification of \eqref{normal_poly} is achieved for all $t$ in \eqref{normal_t0} in the whole space, because $t_0$ does not depend on $L$.  Observe that we can write
\[
f(x)=g(x)^TG g(x) + c^T h(x)\,,
\]
where $g(x):=\sum_{i=1}^n x_i^2 e^i$, $G={\rm diag}(a)$, $c=1$, and $h(x):=x^T B x +d^T x$. Hence, $f$ is of the form \eqref{eq1:BL}, and $\lambda:=a_k$.  Since $f(0)=0$, it is enough to check that conditions (i) and (ii)(b) in Theorem \ref{BL} hold for this $f$. If we can prove that (i) holds for some $L_1>0$, then (ii)(b) will follow from the fact that $g$ is coercive. Hence, let us check condition (i). Denote as $\rho(B):=\max\{|\lambda_n(B)|, |\lambda_1(B)|\}$, the spectral radius of $B$.  We can write
\[
\dfrac{\|h(x)\|}{\|g(x)\|}=\dfrac{ \|x^T B x + d^T x\|}{\|g(x)\|}\le \rho(B) \dfrac{ \|x\|^2 }{\|g(x)\|}+ \|d\|_{1}\, \dfrac{ \|x\|_{\infty}}{\|g(x)\|}.
\]
We will show that the second term on the right hand side tends to zero when $\|x\|$ tends to infinity, and that the first term remains bounded. Indeed, for every $\varepsilon>0$ take $\|x\|_{\infty}>\|d\|_1\,/\varepsilon$. Since $\|x\|=\|x\|_2\le \sqrt{n}\|x\|_{\infty}$ and $\|g(x)\|=\sqrt{\sum_{i} x_i^4}\ge \|x\|_{\infty}^2$ we have
\[
 \|d\|_1\, \dfrac{ \|x\|_{\infty}}{\|g(x)\|}\le  \|d\|_1\, \dfrac{\|x\|_{\infty}}{\|x\|_{\infty}^2}=\dfrac{\|d\|_1}{\|x\|_{\infty}}<\varepsilon,
\]
by our choice of $x$. Hence, the second term tends to zero as claimed. Now let us consider the first term. Using the same facts we arrive at
\[
\rho(B) \dfrac{ \|x\|^2 }{\|g(x)\|} \le\rho(B) \, \dfrac{{n}\|x\|_{\infty}^2}{\|x\|_{\infty}^2} = n \rho(B),
\]
for every $x\in \dR^n$. Altogether, there exists $L_1:=\dfrac{\|d\|_1}{\varepsilon}>0$ such that
\[
\dfrac{\|h(x)\|}{\|g(x)\|}\le n\,\rho(B) +\varepsilon,
\]
for $\|x\|_{\infty}>L_1$. This shows that condition (i) holds, with $R:=n\,\rho(B) +\varepsilon$. Recall that $f(0)=0$, $c=1$, and $\lambda:=a_k$. By part (b) of the theorem we need to find $L$ such that 
\[
\|g(x)\|>\dfrac{n\,\rho(B) +\varepsilon}{a_k}.
\]
We can write
\[
\|g(x)\|\ge \|x\|_{\infty}^2 > \dfrac{n\,\rho(B) +\varepsilon}{a_k}, 
\]
which holds if $\|x\|_{\infty} > \sqrt{\dfrac{n\,\rho(B) +\varepsilon}{a_k}}$. Altogether, we need to have
\[
\|x\|_{\infty}>\max\left\{\dfrac{\|d\|_1}{\varepsilon}, \sqrt{\dfrac{n\,\rho(B) +\varepsilon}{a_k}}\right\}.
\]
Hence, for 
\[
L(\varepsilon):=\max\left\{\dfrac{\|d\|_1}{\varepsilon}, \sqrt{\dfrac{n(\rho(B))+\varepsilon}{a_k}}\right\},
\]
we have $\argmin{f}\subset B_{\infty}[0,L(\varepsilon)]=\{x\::\: \|x\|_{\infty}\le L(\varepsilon)\}$. It can be checked that $L(\varepsilon)$ has a unique positive minimizer $\hat\varepsilon$.  We deduce that 
\[
\argmin\,f\subset B_{\infty}(0,L(\hat\varepsilon)).
\]

%Assume that $\|x\|_{\infty}>\max\{1,\dfrac{1}{\sqrt{a_k}}\}$ then there exists a coordinate $j$ such that $|x_j|>\max\{1,\dfrac{1}{\sqrt{a_k}}\}=:L$. This means that $x_j^4>x_j^2$ and $x_j^4>\dfrac{1}{a_k^2}$. Altogether, we can write
%\[
%\|g(x)\|_2^2=\sum_{i=1}^n x_i^4 \ge x_j^4>\dfrac{1}{a_k^2},
%\]
%so condition condition \eqref{eq2:BL} is satisfied for $L$. Using now Theorem \ref{BL} we deduce that
%every global minimum $x^*$ must have $\|x\|_{\infty}\le L.$

\section{A Trajectory Method Using Steklov Convexification}
\label{sec:trajectory}

The trajectory approach we formulate is based on constructing a continuously differentiable path through points where
\begin{equation}  \label{grad}
\nabla_x \mu_f(x,t) = 0\,,\quad \forall t\in(0,t_0]\,.
\end{equation}
We interpret the variable $x$ as a function dependent on $t$, i.e., $x : [0,t_0] \to \dR^n$, mapping $t \mapsto x(t)$. Assuming that all involved functions are as differentiable as needed, take the total derivative of both sides of \eqref{grad} w.r.t. the independent variable $t$, to obtain
\begin{equation}  \label{grad1}
\nabla_{xx}\mu_f(x(t),t)\,\dot{x}(t) + \nabla_{tx}\mu_{f}(x(t),t) = 0\,,\hbox{ for a.e.\ } t\in(0,t_0]\,,
\end{equation}
where $\dot{x} := (\dot{x}_1,\ldots,\dot{x}_n)$ stands for $dx/dt := (dx_1/dt,\ldots, dx_n/dt)$. In particular, we note that, for $(x(t_0),t_0) := (x_0,t_0)$, we have by \eqref{grad} that $\nabla_x \mu_f(x_0,t_0) = 0$.  After re-arranging \eqref{grad1}, one obtains the initial value problem
\begin{equation}  \label{ODE_valley1}
\dot{x}(t) = -[\nabla_{xx}\mu_f(x(t),t)]^{-1}\nabla_{tx} \mu_f(x(t),t)\,,\quad\mbox{ for a.e.\ } t\in(0,t_0]\,,\quad\mbox{with } x(t_0) = x_0\,,
\end{equation}
provided that the matrix $\nabla_{xx}\mu_f(x(t),t)$ is nonsingular for a.e.\ $t$ in $(0,t_0]$.

\begin{remark} \rm 
Suppose that $x(\cdot)$ is a solution of the ODE in~\eqref{ODE_valley1} and that $\dot{x}$ is continuous at $t=0$.  Then  Proposition~\ref{limits} implies that $\dot{x}(0)\in N(\nabla^2 f(x(0)))$, where
$N(\nabla^2 f(x(0)))$ denotes the null space of $\nabla^2 f(x(0))$. Consequently, if $\nabla^2 f(x(0))$ is nonsingular, then $\lim_{t\to 0^+}\dot{x}(t) = \dot{x}(0)=0$. 
\proofbox
\end{remark}

\subsection{An algorithm for global optimization of quartic polynomials}

Let $x_0\in \dR^n$ and $t_0>0$ be such that $\nabla_x \mu_f(x_0,t_0) = 0$ and that $\nabla_{xx}\mu_f(\cdot,t_0)$ is positive definite.  In other words, given $t_0>0$, first, by using \eqref{mu_grad}, we need to solve the following system of equations for $x_0$\,:
\begin{equation} \label{mu_grad_eqns}
\nabla f(x_0) + \frac{t_0^2}{6}\left(\sum_{i=1}^n \nabla f_{ii}\right)(x_0) = 0\,.
\end{equation}
Then, with these $t_0$ and $x_0$, using \eqref{mu_xx}--\eqref{mu_tx} in the IVP~\eqref{ODE_valley1}, we obtain
\begin{eqnarray}
&& \hspace{-1cm}\dot{x}(t) = -\frac{t}{3}\left[\nabla^2 f(x(t)) + \frac{t^2}{6}\left(\sum_{i=1}^n \nabla^2 f_{ii}\right)\right]^{-1} \left(\sum_{i=1}^n \nabla f_{ii}\right)(x(t))=:\Psi(x(t),t)\,,  \nonumber \\[2mm]
&& \hspace*{75mm}\mbox{ for a.e.\ } t\in(0,t_0]\,,\ x(t_0) = x_0\,.  \label{ODE_valley2} 
\end{eqnarray}

We denote the right-hand side in \eqref{ODE_valley2} by $\Psi:\dR^n\times \dR\to \dR^n$ for conciseness.

Algorithm~\ref{algo1} below serves to estimate a global minimizer of a MQP, $f$.

\begin{algorithm}  \label{algo1} \
\begin{description}
\vspace*{-3mm}
\item[Step \boldmath{$1$}] Choose the parameter $t_0>0$ large enough so that $\mu_f(\cdot,t_0)$ is convex. Find the (global) minimizer $x_0$ of $\mu_f(\cdot,t_0)$, i.e., solve \eqref{mu_grad_eqns} for $x_0$.
\item[Step \boldmath{$2$}] Solve the initial value problem in~\eqref{ODE_valley2}.
\item[Step \boldmath{$3$}] Report $\lim_{t\to 0^+} x(t)=:x^*$ as an estimate of a global minimizer of $f$.
\end{description}
\end{algorithm}

Algorithm~\ref{algo1} is said to be {\em well-defined} for the function $f$ if there exist $x_0$ and $t_0>0$ such that Steps~1--3 of the algorithm can be carried out. This entails, in particular,  that the solution of the IVP in Step~2 is obtained uniquely. Theorem~\ref{convexity} establishes assumptions on $f$ under which Step~1 can be carried out. The next result uses \cite[Theorem 7.1.1]{BC}, and ensures existence and uniqueness of system \eqref{ODE_valley2}. 

\begin{lemma}
Consider the function $\Psi(x,t)=(\Psi_1(x,t),\ldots,\Psi_n(x,t))$ as in \eqref{ODE_valley2}. Assume that for all $i,j=1,\ldots,n$, all the functions $\Psi_i$ and $\partial \Psi_i/\partial x_j$ are continuous (w.r.t. both $x$ and $t$) on a box $B_0\subset \dR^n\times \dR$, with $(x_0,t_0)\in B_0$. Then, the system \eqref{ODE_valley2} has a unique solution defined in the box $B_0$. \end{lemma}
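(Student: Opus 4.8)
The plan is to deduce this directly from the classical Cauchy--Lipschitz (Picard--Lindel\"of) theorem, here invoked as \cite[Theorem 7.1.1]{BC}: the initial value problem $\dot x(t)=F(x(t),t)$, $x(t_0)=x_0$, has a unique solution whose graph stays in a box $B_0\ni(x_0,t_0)$ provided $F$ is continuous on $B_0$ and Lipschitz in the variable $x$, uniformly in $t$, on $B_0$. With $F=\Psi$, the continuity hypothesis is precisely half of what is assumed in the statement, so the entire task reduces to extracting a uniform-in-$t$ Lipschitz bound in $x$ from the assumed continuity of the partial derivatives $\partial\Psi_i/\partial x_j$.

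First I would record a structural remark that explains the hypotheses. By \eqref{ODE_valley2}, $\Psi(x,t)=-\tfrac{t}{3}\,[\nabla^2 f(x)+\tfrac{t^2}{6}C]^{-1}\big(\sum_{i=1}^n\nabla f_{ii}\big)(x)$, which is a rational vector field in $(x,t)$ whose only possible singularities occur where $\det\big(\nabla^2 f(x)+\tfrac{t^2}{6}C\big)=0$. Hence on any box $B_0$ avoiding this singular set the map $\Psi$ is of class $C^\infty$; and such a box containing $(x_0,t_0)$ exists because, by the choice in Step~1 of Algorithm~\ref{algo1}, $\nabla_{xx}\mu_f(x_0,t_0)$ is positive definite, hence nonsingular, so nonsingularity persists on a neighbourhood by continuity of the determinant. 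In particular the assumed continuity of $\Psi_i$ and $\partial\Psi_i/\partial x_j$ on $B_0$ is automatic once $B_0$ is taken inside this region, and it already forces $B_0$ to avoid the singular set.

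The Lipschitz estimate is then the routine mean value argument. I would pass, if necessary, to a compact sub-box $\widehat B\subset B_0$ with $(x_0,t_0)\in\widehat B$ (legitimate, since the conclusion of \cite[Theorem 7.1.1]{BC} is local about $(x_0,t_0)$), on which each continuous function $\partial\Psi_i/\partial x_j$ admits a bound $|\partial\Psi_i/\partial x_j|\le M$. For points $(x,t)$ and $(y,t)$ in $\widehat B$ sharing the same $t$, convexity of the box keeps the segment joining $x$ and $y$ inside it, and applying the mean value theorem to $s\mapsto\Psi_i\big((1-s)x+sy,t\big)$ gives $|\Psi_i(x,t)-\Psi_i(y,t)|\le\sqrt n\,M\,\|x-y\|$; summing over $i$ yields $\|\Psi(x,t)-\Psi(y,t)\|\le nM\,\|x-y\|$. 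Thus $\Psi$ is Lipschitz in $x$ with constant $nM$, uniformly in $t$, over $\widehat B$. Both hypotheses of \cite[Theorem 7.1.1]{BC} being met, that theorem supplies the unique solution of \eqref{ODE_valley2}, completing the proof. The only point requiring any care is the one just carried out --- upgrading continuity of the partials to a uniform Lipschitz bound --- which needs convexity of the (sub-)box and the compactness used to bound the partials; I anticipate no other obstacle.
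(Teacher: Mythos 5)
Your proposal is correct and follows the same route the paper intends: the paper offers no written proof at all, merely the remark preceding the lemma that the result ``uses \cite[Theorem 7.1.1]{BC}'', whose hypotheses (continuity of the $\Psi_i$ and of the $\partial\Psi_i/\partial x_j$ on a box containing $(x_0,t_0)$) are exactly those of the lemma, so the paper treats it as an immediate citation. Your additional mean-value-theorem step, upgrading continuity of the partials on a compact convex sub-box to a uniform-in-$t$ Lipschitz bound in $x$, is the standard (and correct) bridge needed if one quotes the Picard--Lindel\"of theorem in its Lipschitz form rather than in the ``continuous partials'' form of \cite[Theorem 7.1.1]{BC}; the only caveat is that this localization yields the solution on a compact sub-box rather than literally ``in the box $B_0$'', a looseness already present in the lemma's statement.
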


\begin{remark} \rm
For the example normal polynomial in \eqref{normal_poly}, one gets, using \eqref{mu_grad}--\eqref{mu_tx},
\begin{eqnarray}
 \nabla_x\mu_f(x(t),t) &=& 4\left[\begin{array}{c}
 a_1\left(x_1(t)^3 + t^2\,x_1(t)\right) \\ \nonumber
 \vdots \\
 a_n\left(x_n(t)^3 + t^2\,x_n(t)\right) \end{array} \right] + 2\,B\,x(t) + d\,,  \label{mu_grad_normal}  \\[3mm]
\nabla_{xx}\mu_f(x(t),t) &=& 4\ {\rm diag}\left[a_1\left(3\,x_1(t)^2 + t^2\right), \ldots,  a_n\left(3\,x_n(t)^2 + t^2\right)\right] + 2\,B\,, \label{mu_xx_normal} \\[3mm]
 \nabla_{tx}\mu_f(x(t),t) &=& 8\,t
 \left[\begin{array}{c}
 a_1\,x_1(t) \\
 \vdots \\
 a_n\,x_n(t) \end{array} \right]\,, \label{mu_tx_normal}
\end{eqnarray}
The expressions~\eqref{mu_xx_normal}--\eqref{mu_tx_normal} can be substituted into \eqref{ODE_valley1} to derive the specific ODE for the normal polynomial in~\eqref{normal_poly}.
\proofbox
\end{remark}

\subsubsection{Counterexamples and comments on convergence}
\label{counterexample}

While the trajectory generated by Algorithm~\ref{algo1} has been proved to be convergent for $n=1$ in~\cite[Theorem~3]{AriBurKay2019}, a convergence proof cannot be provided for $n\ge2$.  Here we provide a numerical counterexample for $n=2$:  Consider~\eqref{normal_poly} with
{\small
\[
a = \left[\begin{array}{c} 1.05 \\ 1.96 \end{array} \right],\quad 
B = \left[\begin{array}{cc} -0.670 & -0.442 \\ -0.442 & -0.436 \end{array} \right],\quad 
d = \left[\begin{array}{r} 0.08911 \\ -0.2315\ \, \end{array} \right].
\]}
It is an easy matter to show by using \eqref{normal_t0} that $t_0 =0.694$ convexifies $f$.  When Algorithm~\ref{algo1} is invoked with this $t_0$, Step~1 of the algorithm can be carried out easily and the minimizer $x_0$ of the convex function $\mu_f(\cdot,t_0)$ can be found.  However, in Step~2, the ODE solver generates the trajectory from $t=t_0$ to $t=0$ erroneously, since $\nabla_{xx}\mu_f(x(t),t)$ becomes near-singular for values of $t$ around $0.6271$.  

For the working of the algorithm, nonsingularity, and even positive definiteness, of \\ $\nabla_{xx}\mu_f(x(t),t)$ along the trajectory emanating from $x(t_0) = x_0$ is essential, so that the trajectory can at least end at a point which is a local minimizer. However, this alone does not seem to be sufficient in guaranteeing convergence of the trajectory to a global minimizer, as this is also illustrated with Problem~Q64 in Section~\ref{QWYexamples}.

If a given MQP is separable, i.e.,
\[
f(x) = f_1(x_1) + \ldots + f_n(x_n)\,,
\]
then Algorithm~\ref{algo1} clearly yields a global minimizer, by virtue of minimizing $f_i(x)$, $i = 1,\ldots,n$, separately/individually and by the result in~\cite[Theorem~3]{AriBurKay2019}.  This suggests that Algorithm~\ref{algo1} will more likely yield a global minimizer, if the coefficients of the cross terms, such as $x_i^2x_j^2$, $x_ix_j^2x_k$, etc., in the polynomial are relatively small.

\section{Numerical Experiments}
\label{experiments}

In this section, by means of examples, we illustrate the results we presented in the preceding sections, and test Algorithm~\ref{algo1}.  In all examples, unless otherwise stated, we use {\sc Matlab}'s ODE113 to solve the IVP in~\eqref{ODE_valley2}, using the absolute and relative tolerances of $10^{-13}$.  We perform all computations on a 2018 model MacBook Pro, with macOS Mojave (version 10.14.6), the processor 2.7 GHz Intel Core i7, and a 16-GB RAM.  We use the 2019b release of {\sc Matlab}.

\subsection{A Modified Quartic Polynomial}
\label{ex1}

The solution of the problem of global minimization of the $n$-variable quartic polynomial $\sum_{i=1}^n (x_i^2 - i)^2$, which appears in \cite{Qing2006}, can be written down easily: The minimum value is zero, with the minimizers $x_i = \pm\sqrt{i}$, $i = 1,\ldots,n$.  One can also observe that the same function has a local optimum at $x_i = 0$, $i = 1,\ldots,n$.  This example is separable, so we modify it as follows:
\begin{equation}  \label{qing_extended}
f(x) = \sum_{i=1}^n (x_i^2 - i)^2 + \sum_{\substack{i,j=1 \\ j>i}}^nb_{ij}\,x_i x_j + \,\sum_{i=1}^nd_i\,x_i\,,
\end{equation}
with the real numbers $b_{ij}\in[-0.8,-0.1]$ and $d_i\in[0.1,0.5]$, where, by adding the second and third terms on the right-hand side, the minimization obviously becomes nontrivial for our purposes.

A quick inspection of $f(x)$ in \eqref{qing_extended} reveals that it can be written in the normal form described in \eqref{normal_poly}, with $a_i = 1$ and $b_{ii} = -2i$, and the $b_{ij}$ and $d_i$ as given in \eqref{qing_extended}.  Therefore, in applying Algorithm~\ref{algo1} to \eqref{qing_extended}, the bound on $t_0$ provided in \eqref{normal_t0} can be utilized.

\subsubsection{The case when \boldmath{$n=2$}}
\label{ex1.1}

Consider the global minimization of the special case of the function in \eqref{qing_extended} with $n=2$, $b_{12}=-0.7$, $d_1=0.2$ and $d_2=0.3$\,:
\begin{equation}  \label{f1}
f(x) = (x_1^2 - 1)^2 + (x_2^2 - 2)^2 - 0.7\,x_1 x_2 + 0.2\,x_1 + 0.3\,x_2\,.
\end{equation}
The graph of $f$ is depicted in Figure~\ref{ex1:graph}(a).  As can be seen from the graph, $f$ has five stationary points, namely four local minima and one local maximum, which are listed in Table~\ref{stationary_pts}. 

\begin{table}
\begin{center}
{\footnotesize
\begin{tabular}{rrrc}
\multicolumn{1}{c}{$x_1$} & \multicolumn{1}{c}{$x_2$} & \multicolumn{1}{c}{$f(x)$} & Optimality \\[1mm] \hline
 $-1.128494496206$  &  $-1.477960288995$  & $-1.727802817222$ & loc.\ min\\
  1.088972069872  &   1.442265902284  & $-0.407971945969$ & loc.\ min \\
  0.792628798894  &  $-1.398008585572$  &  0.655061617688 & loc.\ min \\
 $-0.888779137505$  &   1.352613115554  &  1.142729255749 & loc.\ min \\
  0.044197271094  &   0.033651793151  &  5.009462397888 & loc.\ max \\ \hline
\end{tabular}}
\end{center}
\caption{\sf Example~\ref{ex1.1} -- The local minima and the local maximum of $f(x)$ in \eqref{f1}.}
\label{stationary_pts}
\end{table}

The Steklov function associated with $f$ is written below using \eqref{eq:mu_multivar} in Theorem~\ref{thm:mu_multivar}.
\[
\mu_f(x,t) = f(x) + 2\,t^2\,\left(x_1^2 + x_2^2 - 1\right) + \frac{2}{5}\,t^4\,.
\]
The relevant derivatives of the Steklov function can easily be written as follows.
\begin{eqnarray}
\nabla_x\mu_f(x,t) &=& \left[\begin{array}{c}
4\,x_1^3 + 4\,(t^2 - 1)\,x_1 - 0.7\,x_2 + 0.2  \\[2mm]
4\,x_2^3 + 4\,(t^2 - 2)\,x_2 - 0.7\,x_1+ 0.3  
\end{array}\right],  \label{mu_grad_ex1} \\[2mm]
\nabla_{xx}\mu_f(x,t) &=& \left[\begin{array}{cc}
12\,x_1^2 + 4\,(t^2 - 1) & -0.7  \\[2mm]
-0.7 & 12\,x_2^2 + 4\,(t^2 - 2)
\end{array}\right],  \label{mu_xx_ex1} \\[2mm]
\nabla_{tx}\mu_f(x,t) &=&  8\,t \left[\begin{array}{c}
x_1  \\[2mm]
x_2
\end{array}\right].  \label{mu_tx_ex1}
\end{eqnarray}
It is straightforward to show that $\nabla_{xx}\mu_f(\cdot,t)$ in \eqref{mu_xx_ex1} is positive definite for all
\[
t^2\ge t_0^2 > \frac{3 + \sqrt{9 - 4\,(2 - (0.49/2))}}{2} \approx 2.0297\,.
\]
One can also use the formula in \eqref{normal_t0}, with 
\[
B = \left[\begin{array}{cc} -2 & -0.35 \\ -0.35 & -4 \end{array} \right],
\]
and subsequently $\lambda_2(B) = -4.059481005021$, and with $a_k = 1$, to get the same bound for $t_0$.  We will take $t_0 = \sqrt{2.1}$.  So the equation $\nabla_x\mu_f(x_0,t_0) = 0$ can simply be written, from \eqref{mu_grad_ex1}, with $x_0 := (x_{0,1}, x_{0,2})$, as
\begin{eqnarray*}\nonumber
4\,x_{0,1}^3 + 4.4\,x_{0,1} - 0.7\,x_{0,2} + 0.2 &=& 0\,,  \\[1mm]
4\,x_{0,2}^3 + 0.4\,x_{0,1} - 0.7\,x_{0,1} + 0.3 &=& 0\,,  
\end{eqnarray*}
the unique solution of which is found numerically as
\[
x_0 = (-0.10500662833508,\ -0.38094363094061)\,.
\]
Finally, the IVP in~\eqref{ODE_valley2} (or simply \eqref{ODE_valley1}) can be written down for this example, using \eqref{mu_xx_ex1}--\eqref{mu_tx_ex1}, as
\begin{eqnarray}
\left[\begin{array}{c}
\dot{x}_1 \\[4mm]
\dot{x}_2 \end{array} \right] &=&
-\frac{8\,t}{16\,[3\,x_1^2 + t^2 - 1]\,[3\,x_2^2 + t^2 - 2] - 0.49}\left[\begin{array}{c}
(3\,x_2^2 + t^2 - 2)\,x_1 - 0.7\,x_2 \\[4mm]
(3\,x_1^2 + t^2 - 1)\,x_2 - 0.7\,x_1  \end{array} \right], \nonumber \\[4mm]
&&\hspace*{60mm} \mbox{a.e.\ } t\in[0,\sqrt{2.1}]\,,\ \ x(\sqrt{2.1}) = x_0\,.  \label{ODE_ex1}
\end{eqnarray}
In \eqref{ODE_ex1}, we do not show the dependence of $x_i$ on $t$ for the sake of clarity in appearance.  The solution curve of the IVP above is displayed in Figure~\ref{ex1:graph}(b).  The solution found for the global minimizer $x(0) = x^*$ and the global minimum $f(x^*)$ were correct to 12dp.

\afterpage{\clearpage}
\begin{figure}[ht]
\begin{center}
\psfrag{f}{$f(x)$}
\psfrag{x1}{$x_1$}
\psfrag{x2}{$x_2$}
\includegraphics[width=120mm]{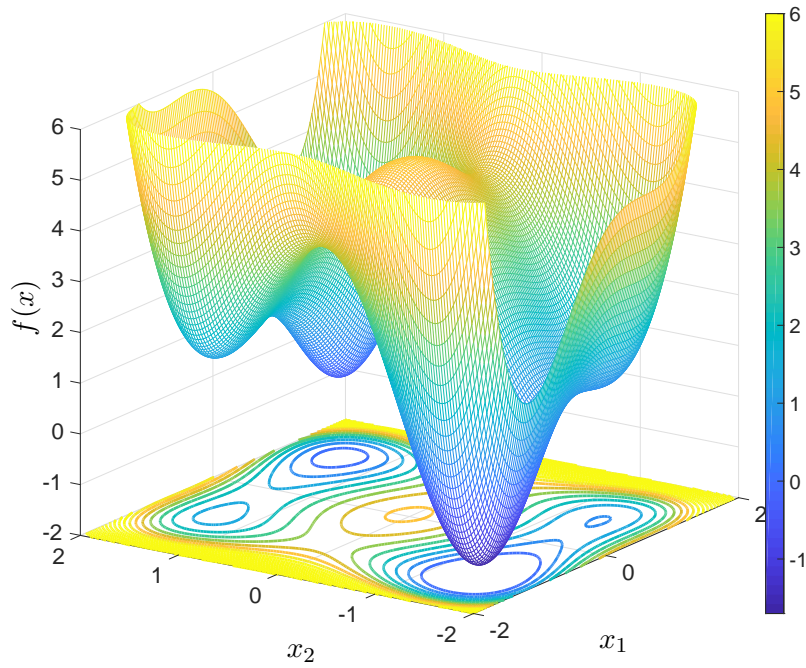} \\[3mm]
(a) The graph and the contours.
\end{center}
\begin{center}
\psfrag{f}{$f(x)$}
\psfrag{x1}{$x_1$}
\psfrag{x2}{$x_2$}
\psfrag{x0}{$x(t_0)$}
\psfrag{xf}{$x(0)$}
\includegraphics[width=120mm]{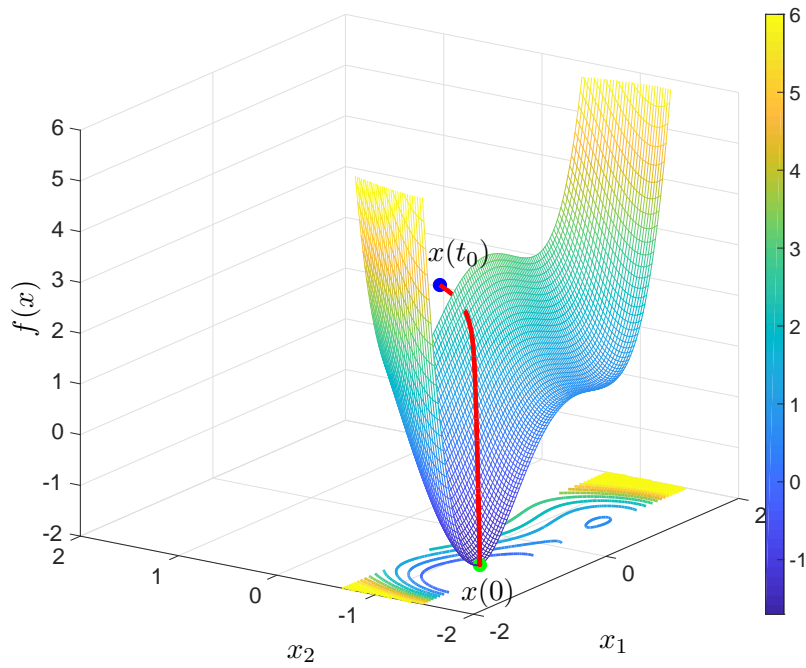} \\[3mm]
(b) A cross-sectional view revealing the trajectory.
\end{center}
\caption{\sf Example~\ref{ex1.1} -- The trajectory method using Steklov convexification for the quartic polynomial $f(x)$ in \eqref{f1}.} 
\label{ex1:graph}
\end{figure}
%\clearpage

\subsubsection{The case when \boldmath{$n=3$}}
\label{ex1.2}

Consider global minimization of the special case of the function in \eqref{qing_extended} with $n=3$, $b_{12}=b_{13}=b_{23}=b_{31}=b_{32}=-0.7$, $d_1=d_2=d_3=0.2$\,:
\begin{equation}  \label{f2}
f(x) = (x_1^2 - 1)^2 + (x_2^2 - 2)^2 + (x_3^2 - 3)^2 - 0.7\,(x_1 x_2 + x_1 x_3 + x_2 x_3)+ 0.2\,(x_1 + x_2 + x_3)\,.
\end{equation}
\begin{table}
\begin{center}
{\footnotesize
\begin{tabular}{rrrrc}
\multicolumn{1}{c}{$x_1$} & \multicolumn{1}{c}{$x_2$} & \multicolumn{1}{c}{$x_3$} & \multicolumn{1}{c}{$f(x)$} & Optimality \\[1mm] \hline
 $-1.231880992829$  &  $-1.542141914625$  & $-1.815208552194$ & $-5.274573029462$ & loc.\ min \\
1.200891943571 & 1.520648288507 & 1.799146668019 & $-3.452471570615$ & loc.\ min \\
$-1.061403679498$ & 1.252794736408 & $-1.734795374831$ & 1.055988142832 & loc.\ min \\
1.011826545657 & $-1.292021024871$ & 1.715303007762 & 1.651815299271 & loc.\ min \\
$-1.003166837343$ & $-1.398250342156$ & 1.647600096313 & 1.720481344121 & loc.\ min \\
0.943957956849 & 1.367695525599 & $-1.669626761827$ & 2.000097716433 & loc.\ min \\
1.094178773491 & $-0.229820026863$ & 1.748687472905 & 3.474361430365 & neither \\
0.382758186693 & 0.143935301337 & $-1.724978996067$ & 5.004365968007 & neither \\
$-0.246629494866$ & $-0.103884348800$ & 1.713187106741 & 5.518166976389 & neither \\
$-1.117077740844$ & $-1.465384771470$ & 0.168916876633 & 7.589547162080 & neither \\
1.079895855992 & 1.442371632342 & $-0.131218726998$ & 8.550630493036 & neither \\
0.804174488540 & $-1.388617599449$ & 0.050802887760 & 9.810230669248 & neither \\
0.313255479037 & $-1.409450159727$ & 0.080787110272 & 9.942407427153 & neither \\
$-0.889314043601$ & $1.359163492610$ & $-0.010741630991$ & 10.007809303187 & neither \\
$-0.190950603913$ & 1.390428579020 & $-0.053353507188$ & 10.375658316925 & neither \\
$-1.009359542642$ & 0.107877288339 & 0.069364378484 & 12.878662566964 & neither \\
0.965079959377 & $-0.056352609214$ & $-0.036358449952$ & 13.219757605353 & neither \\
0.044327774003 & 0.019995236114 & 0.012915209173 & 14.007719773224 & loc.\ max \\
\hline
 \end{tabular}}
\end{center}
\caption{\sf Example~\ref{ex1.2} -- Stationary points of $f(x)$ in \eqref{f2} and their nature.}
\label{stationary_ptsb}
\end{table}
The stationary points that we could locate for $f(x)$ are listed in Table~\ref{stationary_ptsb}.  The table also indicates the nature of these points.  Writing down the derivatives of $\mu_f$ and determining $t_0$ in this case is more involved than the case when $n=2$; therefore it is convenient to put the function into the form of \eqref{normal_poly}, with
\[
a = \left[\begin{array}{c} 1 \\ 1 \\ 1 \end{array} \right],\quad B = \left[\begin{array}{ccc} -2 & -0.35 & -0.35 \\ -0.35 & -4 & -0.35 \\ -0.35 & -0.35 & -6\end{array} \right],\quad d = \left[\begin{array}{c} 0.2 \\ 0.2 \\ 0.2 \end{array} \right].
\]
Note that $\lambda_3(B) = -6.099604966650$ and so, using \eqref{normal_t0}, with $a_k = 1$,
\[
t_0 > \sqrt{\dfrac{|\lambda_3(B)|}{2}} = \sqrt{3.049802}\,.
\]
We safely take $t_0 = \sqrt{3.1}$ in Algorithm~\ref{algo1}, and (assuming that all local minimizers are those listed in Table~\ref{stationary_ptsb}) obtain the global minimizer $x(0) = x^*$ and the global minimum $f(x^*)$ correct to 12dp.  We also note that $\|\nabla f(x^*)\|_\infty = 1.6\times 10^{-13}$, which gives an idea about the accuracy of the solution, and that the hessian $\nabla^2 f(x^*)$ is positive definite, reconfirming the local minimality of the solution.

The CPU time of running a {\sc Matlab} code  implementing Algorithm~\ref{algo1} for this example can be reliably measured if the code is run 1000 times; otherwise the CPU time is too small to measure.  The CPU time of running the code 1000 times was observed to be about 5 seconds, which means that the CPU time on the average of the {\sc Matlab}  implementation of Algorithm~\ref{algo1} was 0.005 seconds.

\subsubsection{The case when \boldmath$n\ge3$}
\label{ex1.3}

We consider the polynomial \eqref{qing_extended} with various $n$, with $b_{ij}=-0.7$, $i,j =1,\ldots,n$ and $i\neq j$, and $d_i=0.2$, $i =1,\ldots,n$.  Table~\ref{ex1_performance} lists the results, including $t_0$ used by Algorithm~\ref{algo1}, the optimum value found, as well as the CPU time needed to run the {\sc Matlab} code implementing the algorithm.  

\begin{table}
\begin{center}
{\footnotesize
\begin{tabular}{rrllr}
\multicolumn{1}{c}{$n$} & \multicolumn{1}{c}{$t_0$} & \multicolumn{1}{c}{$f(x^*)$} & \multicolumn{1}{c}{$\|\nabla f(x^*)\|_\infty$} & \multicolumn{1}{c}{CPU time} \\[1mm] \hline
 3 & 1.761  & $-5.274573029462\times10^{0}$ & $1.6\times10^{-13}$ & 0.005 \\
5 & 2.354 & $-2.425189606694\times10^{1}$ & $1.4\times10^{-13}$ & 0.009 \\
10 & 3.283 & $-1.937676325137\times10^{2}$ & $7.3\times10^{-13}$ & 0.012 \\
50 & 7.196 & $-2.434927308593\times10^{4}$ & $5.3\times10^{-12}$ & 0.043 \\
100 & 10.13\,\,\, & $-1.951017166604\times10^{5}$ & $2.5\times10^{-11}$ & 0.16\hspace*{1.6mm} \\
500 & 22.50\,\,\, & $-2.442736975195\times10^{7}$ & $2.6\times10^{-10}$ & 3.9\hspace*{3.1mm} \\
1000 & 31.78\,\,\, & $-1.954665241231\times10^{8}$ & $7.3\times10^{-10}$ & 23.0\hspace*{2.9mm} \\
2000 & 44.90\,\,\, & $-1.563932649564\times10^{9}$ & $3.1\times10^{-9}$ & 120\hspace*{5.3mm} \\
5000 & 70.93\,\,\, & $-2.443840227592\times10^{10}$ & $1.2\times10^{-8}$ & 1800\hspace*{5.3mm} \\
\hline
 \end{tabular}}
\end{center}
\caption{\sf Example~\ref{ex1.3} -- Solutions for \eqref{qing_extended} with various $n$ and run times (in seconds) of Algorithm~\ref{algo1}.}
\label{ex1_performance}
\end{table}

The $\ell_\infty$-norm of the gradient of $f$, i.e., $\|\nabla f(x^*)\|_\infty$, listed in Table~\ref{ex1_performance} provides information about the accuracy of the reported solution.  We note that the hessian $\nabla^2 f(x^*)$ was checked and found to be positive definite for each $n$ in the table.  These two pieces of information reconfirm the local optimality of each solution.

{\em Local optimality} is the most we can vouch for the minimum values reported in Table~\ref{ex1_performance} (perhaps, except for $n=3$), as we have no certificate for the global optimality.  However, with growing $n$, ``deeper'' negative minimum values are obtained as expected for these kinds of polynomials.

As expected, the CPU time grows exponentially with $n$; however, given the fact that the polynomials we are dealing with are not sparse, Algorithm~\ref{algo1} might be deemed particularly successful in tackling polynomials with a large number of variables.

\subsection{Test Problems Involving Other Normal Polynomials}

\subsubsection{Test problems with \boldmath{$n=6$} from \cite{QiWanYang2004}}
\label{QWYexamples}
We consider the problems of minimizing quartic normal polynomials with six variables labelled Q61, Q62, Q63 and Q64 in \cite{QiWanYang2004}.  These problems are in the form described in \eqref{normal_poly}.  In what follows we give the descriptions of these polynomials in terms of the parameters in~\eqref{normal_poly}. \\

\noindent
{\bf Problem Q61.}
{\footnotesize
\[
a = \left[\begin{array}{c} 9 \\ 2 \\ 6 \\ 4 \\ 8 \\ 7 \end{array} \right],\quad 
B = \left[\begin{array}{cccccc} 
4 & 4 & 9 & 3 & 4 & 1 \\ 
4 & 3 & 7 & 9 & 9 & 2 \\ 
9 & 7 & 4 & 7 & 6 & 6 \\ 
3 & 9 & 7 & 4 & 2 & 6 \\
4 & 9 & 6 & 2 & 8 & 3 \\
1 & 2 & 6 & 6 & 3 & 5
\end{array} \right],\quad 
d = \left[\begin{array}{c} 2 \\ 6 \\ 5 \\ 0 \\ 0 \\ 2 \end{array} \right].
\]}
Using \eqref{normal_t0}, we get $t_0 > 1.440$.  We have set $t_0 = 1.540$ in Algorithm~\ref{algo1}, and obtained the solution
{\footnotesize
\[
x^* = [0.545218813388\ {-1.464410189792}\ {-0.720606654276}\ 1.178144265592\ 0.794065108243\ {-0.465794119448}]
\]}
\noindent
with $f(x^*) = -28.94281730403047$, $\|\nabla f(x^*)\|_\infty = 6.0\times 10^{-11}$ and $\nabla^2 f(x^*)\succ 0$.  The same solution is reported in~\cite{QiWanYang2004} with 8 dp resulting in $\|\nabla f(x^*)\|_\infty = 3.8\times 10^{-7}$. \\

\noindent
{\bf Problem Q62.}
{\footnotesize
\[
a = \left[\begin{array}{c} 4 \\ 1 \\ 8 \\ 4 \\ 6 \\ 7 \end{array} \right],\quad 
B = \left[\begin{array}{cccccc} 
4 & 0 & 0 & 3 & 0 & 3 \\ 
0 & 0 & 0 & 6 & 6 & 0 \\ 
0 & 0 & 5 & 0 & 3 & 6 \\ 
3 & 6 & 0 & 4 & 4 & 3 \\ 
0 & 6 & 3 & 4 & 4 & 5 \\ 
3 & 0 & 6 & 3 & 5 & 2
\end{array} \right],\quad 
d = \left[\begin{array}{c} 8 \\ 7 \\ 7 \\ 8 \\ 6 \\ 2 \end{array} \right].
\]}
Using \eqref{normal_t0}, we get $t_0 > 1.840$.  We have set $t_0 = 1.940$ in Algorithm~\ref{algo1}, and obtained the solution
{\footnotesize
\[
x^* = [-0.654664171603\ {-1.869516007115}\ {-0.368135071982}\ 0.819086646324\ 0.775622316964\ {-0.531322790207}]
\]}
\noindent
with $f(x^*) = -23.0056478266632$, $\|\nabla f(x^*)\|_\infty = 1.1\times 10^{-11}$ and $\nabla^2 f(x^*)\succ 0$.  The same solution is reported in~\cite{QiWanYang2004} with 8 dp resulting in $\|\nabla f(x^*)\|_\infty = 3.3\times 10^{-7}$. \\

\noindent
{\bf Problem Q63.}
{\footnotesize
\[
a = \left[\begin{array}{c} 9 \\ 7 \\ 1 \\ 4 \\ 9 \\ 9 \end{array} \right],\quad 
B = \left[\begin{array}{cccccc} 
8 & 0 & 1 & 3 & 9 & 9 \\ 
0 & 0 & 9 & 5 & 2 & 6 \\ 
1 & 9 & 4 & 1 & 1 & 8 \\ 
3 & 5 & 1 & 0 & 8 & 0 \\ 
9 & 2 & 1 & 8 & 2 & 1 \\ 
9 & 6 & 8 & 0 & 1 & 8
\end{array} \right],\quad 
d = \left[\begin{array}{c} 5 \\ 8 \\ 6 \\ 9 \\ 9 \\ 0 \end{array} \right].
\]}
Using \eqref{normal_t0}, we get $t_0 > 2.171$.  We have set $t_0 = 2.271$ in Algorithm~\ref{algo1}, and obtained the solution
{\footnotesize
\[
x^* = [-0.677847258779\ 0.915757213506\ {-1.676567471092}\ {-1.129390429402}\ 0.769478574815\ 0.740933617859]
\]}
\noindent
with $f(x^*) = -31.78036928464823$, $\|\nabla f(x^*)\|_\infty = 4.2\times 10^{-11}$ and $\nabla^2 f(x^*)\succ 0$.  The solution reported in~\cite{QiWanYang2004}, on the other hand, is another local minimizer $\hat{x}^*\neq x^*$ of $f$ with $f(\hat{x}^*) = -16.27241852 > f(x^*)$.  The same problem is also attempted in \cite{WuTiaQuaUgo2014} using a different numerical approach, resulting in the same solution as ours here but correct only up to 3 dp and with $\|\nabla f(x^*)\|_\infty = 5.7\times 10^{-3}$. \\

\noindent
{\bf Problem Q64.}
{\footnotesize
\[
a = \left[\begin{array}{c} 1 \\ 2 \\ 1 \\ 6 \\ 2 \\ 1 \end{array} \right],\quad 
B = \left[\begin{array}{cccccc} 
4 & 1 & 4 & 2 & 4 & 4 \\ 
1 & 1 & 4 & 0 & 1 & 7 \\ 
4 & 4 & 4 & 6 & 6 & 7 \\ 
2 & 0 & 6 & 6 & 7 & 9 \\ 
4 & 1 & 6 & 7 & 3 & 0 \\ 
4 & 7 & 7 & 9 & 0 & 3
\end{array} \right],\quad 
d = \left[\begin{array}{c} 8 \\ 7 \\ 6 \\ 4 \\ 7 \\ 6 \end{array} \right].
\]}
Using \eqref{normal_t0}, we get $t_0 > 2.240$.  We have set $t_0 = 2.340$ in Algorithm~\ref{algo1}, and obtained the solution
{\footnotesize
\[
x^* = [0.707423237483\ 1.239514850400\ 1.260381219594\ 1.082078205488\ {-1.644024006236}\ {-2.351712409938}]
\]}
\noindent
with $f(x^*) = -60.614291716400$, $\|\nabla f(x^*)\|_\infty = 2.1\times 10^{-10}$ and $\nabla^2 f(x^*)\succ 0$.  The solution reported in~\cite{QiWanYang2004}, on the other hand, is certainly better with
{\footnotesize
\[
\hat{x}^* = [-1.350391459\ {-1.483150332}\ {-1.369006772}\ {-1.10594118}\  1.54353024\  2.33088412]\,,
\]}
$f(\hat{x}^*) = -70.87818171 < f(x^*)$, $\|\nabla f(\hat{x}^*)\|_\infty = 2.0\times 10^{-7}$ and $\nabla^2 f(\hat{x}^*)\succ 0$.

\subsubsection{Randomly generated instances of normal polynomials with \boldmath{$n\ge2$}}
\label{rand_perf}

In order to comment further on the performance of Algorithm~\ref{algo1}, we consider randomly generated normal polynomials in the form described in~\eqref{normal_poly}.  We draw the values of the constant coefficients in~\eqref{normal_poly} at random {\em uniformly} from certain intervals such that:
\[
a_i \in [1,2]\,,\quad b_{ii} \in [-1,1]\,,\quad b_{ij} \in I_B\,,\quad d_i \in [-1,1]\,,
\]
where $i = 1,\ldots,n$, $i\neq j$, and $I_B$ is an interval emphasizing how relatively big or small the coefficients of the cross terms in~\eqref{normal_poly} will be.

\begin{table}
\begin{center}
{\footnotesize
\begin{tabular}{lrrrrrr}
\multicolumn{1}{c}{$n$ \textbackslash\ $I_B$} & \multicolumn{1}{c}{$[-0.1, 0.1]$} & \multicolumn{1}{c}{$[-0.4, 0.4]$} & \multicolumn{1}{c}{$[-0.7, 0.7]$} & \multicolumn{1}{c}{$[-1, 1]$} & \multicolumn{1}{c}{$[-2, 2]$} & \multicolumn{1}{c}{$[-10, 10]$} \\[1mm] \hline
\ \,2 & 0 (N/A) & 15 (0.02\%)  & 61 (0.06\%) & 122 (0.12\%) & 73 (0.07\%) & 4 (0.004\%) \\
\ \,5 & 12 (0.01\%) & 368 (0.4\%) & 1040 (1.0\%) & 1082 (1.1\%) & 565 (0.6\%) & 213 (0.2\%) \\
10 & 46 (0.05\%) & 2265 (2.3\%) & 3634 (3.6\%) & 3092 (3.1\%) & 1740 (1.7\%) & 1011 (1.0\%) \\
20 & 299 (0.30\%) & 9239 (9.2\%) & 8484 (8.5\%) & 6350 (6.4\%) & 4370 (4.4\%) & 3967 (4.0\%) \\[1mm]
\hline
 \end{tabular}}
\end{center}
\caption{\sf Example~\ref{rand_perf} -- Number (and percentage) of failures of Algorithm~\ref{algo1} in yielding a local minimizer $x(0)$ (which may or may not be a global minimizer), with 100,000 randomly generated normal polynomials for each $n$ and $I_B$.  The values of $b_{ij}$, $i,j = 1,\ldots,n$ and $i\neq j$, are drawn at random uniformly from the interval $I_B$, for each problem instance.}
\label{random_perf}
\end{table}

We employed \eqref{normal_t0}, in getting a convexifying $t_0$.  In finding $x_0$ in Step~1 of Algorithm~\ref{algo1}, we used (pure) Newton's method and tolerance $10^{-10}$.  In carrying out Step~2 of the algorithm, we utilized {\sc Matlab}'s solver ODE113 with the relative and absolute tolerances of $10^{-8}$.

The performance of Algorithm~\ref{algo1} as applied to a large number of random instances of the normal polynomial in~\eqref{normal_poly} is summarized in Table~\ref{random_perf}.  The size of the problems range from $n=2$ to $n=20$.  In order to get reliable statistics, we have generated 100,000 problems randomly (as described above) for each pair $(n, I_B)$.   This makes an overall 2.4 million instances.

By {\em failure} of Algorithm~\ref{algo1}, we mean that  $\|\nabla f(x(0))\|_\infty > 10^{-6}$.  In Table~\ref{random_perf}, we report for each $n$ and $I_B$ the number of failures, as well the corresponding percentage (in reference to 100,000 instances).  We have verified that $\nabla^2 f(x(0))\succ 0$ in every single one of the 2.4 million instances for which Algorithm~\ref{algo1} did not fail, furnishing the local optimality of $x(0)$.

In view of the comments made in Section~\ref{counterexample} on the convergence of the trajectory generated by Algorithm~\ref{algo1} to a global minimizer, we observe that when the coefficients of the cross terms were relatively small, i.e., when, say $I_B = [-0.1, 0.1]$, the algorithm did not fail in any of the 100,000 instances for $n=2$.  Of course, this does not mean that we probably have a proof for this case, as it is possible to find a counterexample by running an even larger number of random instances, as this was exemplified in Section~\ref{counterexample}.  For $n = 5$, 10 and 20, using the same interval $I_B$, the number of failures is rather small, at far less than $1\%$.  For the other lengths of $I_B$, the failure rates remain very small, far less than $1\%$ for $n=2$ and about $1\%$ for $n=5$.  An interesting phenomenon observed is that the failure rate starts dropping for each $n$ as $I_B$ gets significantly lengthier. Further investigation of this phenomenon is outside the scope of the current paper.

\section{Conclusion}
\label{conclusion}

A new algorithm has been proposed for global minimization of multivariate quartic polynomials.  The algorithm involves the solution of an IVP defined by the Steklov function.  We have derived new results about the properties of the Steklov function, including the convexification of a MQP $f$.  We illustrated the implementation of the algorithm and tested its performance by using a large number of numerical examples.

The method we propose is provably convergent in the special case of univariate polynomials~\cite{AriBurKay2019}, and we have numerically demonstrated in the current paper that it can fail in the multivariate case.  Therefore the current theoretical results directly ensure its convergence only in the case of separable multivariate polynomial problems. However, the problem we are dealing with is NP-complete; therefore, the question of how the method performs in the multivariate polynomial case remains an open problem.  Although the algorithm is observed to fail in finding a global minimizer for some polynomials, it is demonstrated to be successful in the great majority of the randomly generated (more than two million) instances of polynomials given in the form~\eqref{normal_poly}.  Bearing in mind the fact that there does not exist panacea even for the specific (relative simpler) form~\eqref{normal_poly}, the algorithm we propose offers a viable alternative to existing numerical approaches in the literature.

On the other hand, an analysis identifying which non-separable problems can be tackled by our method is an important and promising line of future research.  Moreover, although our study in this paper involves MQPs, one should note that the Steklov function is defined for more general (differentiable) functions.  So, it would be interesting to consider a version of the algorithm also for the global optimization of general multivariate functions.

\section*{Acknowledgments}

The authors offer their warm thanks to the anonymous reviewers for their careful reading, and the comments and suggestions they made, which in turn have improved the manuscript.  They are also grateful to the editors for efficiently handling the manuscript.

\end{document}